\numberwithin{equation}{section}
\newtheorem{theorem}{Theorem}
\newtheorem{cor}[theorem]{Corollary}
\newtheorem{lemma}[theorem]{Lemma}
\newtheorem{prop}[theorem]{Proposition}
\theoremstyle{definition}
\newtheorem{definition}[theorem]{Definition}
\newtheorem{claim}[theorem]{Claim}
\newtheorem{assumption*}[theorem]{Assumption}
\newtheorem{problem}[theorem]{Problem}
\theoremstyle{remark}
\newtheorem{remark}[theorem]{Remark}
\newtheorem*{acknowledgements*}{Acknowledgements}
\numberwithin{theorem}{section}
\newcommand{\eps}{\varepsilon}
\newcommand{\N}{\mathbb{N}}
\newcommand{\R}{\mathbb{R}}
\definecolor{darkgreen}{rgb}{0.0, 0.6, 0.0}
\begin{document}

\title[Homeomorphisms between the spheres of $\ell_\infty^k$ and $\ell_1^k$]{On the uniform continuity of homeomorphisms between the spheres of $\ell_\infty^k$ and $\ell_1^k$ }%

\date{\today} 

\author[Braga]{Bruno M. Braga}
\address[B. M. Braga]{IMPA, Estrada Dona Castorina 110, 22460-320, Rio de Janeiro, Brazil}
\email{demendoncabraga@gmail.com}
\urladdr{\url{https://sites.google.com/site/demendoncabraga}}
\thanks {B. M. Braga  was partially supported by FAPERJ, grant E-26/200.167/2023,  by CNPq, grant 303571/2022-5, and by Serrapilheira, grant R-2501-51476.}

\author[Gartland]{C. Gartland}
\address[C. Garland]{Department of Mathematics and Statistics, University of
North Carolina at Charlotte, Charlotte, Noth Carolina, USA}
\email{cgartla1@charlotte.edu}
\urladdr{\url{https://chrisgartland.wordpress.com/}}
\thanks{C. Gartland was partially supported by NSF Award DMS-2546184}

\author[Lancien]{G. Lancien}
\address[G. Lancien]{Universit\'e Marie et Louis Pasteur, CNRS, LmB (UMR 6623), F-25000 Besan\c con, France.}
\email{gilles.lancien@univ-fcomte.fr}
\thanks{G. Lancien was partially supported by the French ANR project No. ANR-24-CE40-0892-01.}

\author[Motakis]{P. Motakis}
\address[P. Motakis]{Department of Mathematics and Statistics, York University, 4700 Keele Street, Toronto, Ontario, M3J 1P3, Canada}
\email{pmotakis@yorku.ca}
\urladdr{\url{https://pmotakis.mathstats.yorku.ca/}}
\thanks{P. Motakis was partially supported by NSERC Grant RGPIN-2021-03639.}

\author[Perneck\'a]{E. Perneck\'a}
\address[E. Perneck\'a]{Faculty of Information Technology, Czech Technical University in Prague, Th\'akurova 9, 160 00, Prague 6, Czech Republic}
\email{eva.pernecka@fit.cvut.cz}
\thanks{}

\author[Schlumprecht]{Th. Schlumprecht}
\address[Th. Schlumprecht]{Texas A\&M University, College Station, TX 77843,
USA and Faculty of Electrical Engineering, Czech Technical University in Prague, Zikova 4, 166 27, Prague}
\email{t-schlumprecht@tamu.edu}
\urladdr{\url{https://people.tamu.edu/~t-schlumprecht/}}
\thanks{Th. Schlumprecht was partially supported by NSF Award DMS-2349322}

\begin{abstract}
We consider the problem of whether there is a sequence of homeomorphisms $(F_k)_k$ between the unit spheres of the $k$-dimensional Banach spaces $\ell_\infty^k$ and $\ell_1^k$ which is also equi-uniformly continuous. We prove that this cannot be the case if the sequence $(F_k)_k$ either (1) {\it does not increase support sizes} (which is a property strictly weaker than support preservation) or (2) is {\it step preserving} (which is a property strictly weaker than being equivariant with respect to permutations of the canonical basis). We also provide quantitative estimates relating the moduli of uniform continuity of the maps to the dimension of the spaces. This gives partial answers to a question of W. B. Johnson and it is related to the problem of whether $c_0$ has Kasparov and Yu's Property (H). Our results also apply to more general spaces other than $\ell_1$ such as spaces with unconditional bases which are not equivalent to the standard $c_0$ basis. Finally, we derive an asymptotic concentration inequality that must be satisfied by step preserving equi-uniformly continuous maps defined on the positive parts of these unit spheres.
\end{abstract}
  
\maketitle

 \section{Introduction}
In 2012,  Kasparov and  Yu introduced a new and intriguing property for Banach spaces: {\it{Property (H)}}. Recall, denoting the unit sphere of a Banach space $Y$ by $S_Y$, we say that a Banach space $X$ has {\it{Property (H)}} if there are a uniformly continuous map $F\colon S_X\to S_{\ell_2}$ and increasing sequences $(X_n)_n$ and $(H_n)_n$ of finite dimensional  subspaces such that $\bigcup_nX_n$ and $\bigcup_n H_n$ are dense in $X$ and $\ell_2$, respectively, and such that the restrictions of $F$ to each $ S_{X_n}$ is a homeomorphism onto $S_{H_n}$ (see \cite[Definition 1.1]{KasparovYu2012GeoTop}). We call the sequence $(X_n)_n$ a {\it paving} of $X$. Property (H) is quite unique in a few aspects. While it may look at first glance to be a sort of ``nonlinear embedding/equivalence property'' between Banach spaces, this is really not the case since the inverses of the restrictions $F\restriction S_{X_n}$ need not be equi-uniformly continuous. This lack of uniform continuity places Property (H) in an unusual spot: it is somewhat simultaneously  a geometric and a topological property which, on top of it, is also related to the local theory of Banach spaces since it must preserve the pavings $(X_n)_n$ and $(H_n)_n$.
 
Due to its relation to the Novikov conjecture ---  one of the most important unsolved problems in topology --- it is hard to overstate the importance of Property (H). Kasparov and Yu showed in \cite[Theorem 1.2]{KasparovYu2012GeoTop} that  if a countable discrete group $\Gamma$ admits a coarse embedding into a Banach space with Property (H), then the Novikov conjecture holds for $\Gamma$. In particular, if there is a Banach space with Property (H) which is coarsely universal for all countable discrete groups, then the Novikov conjecture holds unconditionally. Since by a famous result of Aharoni  (\cite[Theorem in Page 288]{Aharoni1974Israel}) every separable metric space Lipschitzly embeds into $c_0$ --- the Banach space of sequences converging to zero --- Kasparov and Yu asked whether $c_0$ has Property (H) (see \cite[Page 1861]{KasparovYu2012GeoTop}).

\begin{problem}[G.\ Kasparov and G.\ Yu]
    Does $c_0$ have Property (H)? In fact, do all separable Banach spaces have Property (H)?\label{ProblemPropH}
\end{problem}

Given the magnitude that a positive answer to Problem \ref{ProblemPropH} would have, it seems sensible to expect the problem to have a negative answer. While, on the one hand, it is very surprising that no examples of separable Banach spaces failing Property (H) are known, on the other hand, there is currently  only one method to show Property (H) holds. In summary: Property (H) remains very poorly understood to date.  

The known method of showing that certain spaces have Property (H) relies on   Mazur maps. Recall, if $p\in [1,\infty)$, then the map 
\begin{align*}
    M_{p}\colon S_{\ell_p}&\to S_{\ell_2}\\
(x_n)_n &\mapsto\left(\mathrm{sign}(x_n)|x_n|^{p/2}\right)_n
\end{align*}
is a uniform homeomorphism between the unit spheres of $\ell_p$ and $\ell_2$ (\cite[Theorem 9.1]{BenyaminiLindenstrauss2000Book}). Considering the finite dimensional spaces $(\ell_p^k)_k$ and $(\ell_2^k)_k$ as subspaces of $\ell_p$ and $\ell_2$, respectively,   these maps  restrict to uniform homeomorphisms between the unit spheres of $\ell_p^k$ and $\ell_2^k$. Hence all $\ell_p$'s, for $p\in [1,\infty)$, have Property (H). 
More generally, using generalized  Mazur maps, one can obtain that every Banach space $X$ with an unconditional basis which does not contain  the spaces $(\ell_\infty^k)_k$ with bounded distortion also has Property (H) (see \cite[Theorem 1]{ChengWang2018JMAA}). This is so because the unit sphere of any such space $X$ is uniformly homeomorphic to the one of $\ell_2$ by a ``Mazur type map'' which preserves the canonical pavings given by   the unconditional basis of $X$ and by  the canonical basis of $\ell_2$ (see \cite[Theorem 2.1]{OdellSchlumprecht1994Acta} and its proof). Moreover, Property (H) also holds for Banach lattices not containing $(\ell_\infty^k)_k$ and for Schatten $p$-classes, for $p\in [1,\infty)$, as noticed in \cite[Page 1860]{KasparovYu2012GeoTop}.

To help illustrate how little is known  about Problem \ref{ProblemPropH}, we mention that it is not even known if $c_0$ fails the requirements of Property (H) with its standard paving. Precisely, is there a uniformly continuous map $F\colon S_{c_0} \to S_{\ell_2}$ such that, for some increasing sequence $(n_k)_k$ of naturals, the restriction of $F$ to the unit sphere of each $\ell_\infty^{n_k}$ has the whole $S_{\ell_2^{n_k}}$ as its homeomorphic image? As asked by Johnson in \cite{JohnsonOverFlow}, even the following remains strikingly open:

\begin{problem}[W.\ B.\ Johnson]
Is there a sequence of homeomorphisms $(F_k)_k$ between the unit spheres of $\ell_\infty^k$ and $\ell_2^k$ whose Lipschitz constants are (uniformly) bounded?\label{ProblemJohnson}
\end{problem}

A positive answer to Problem \ref{ProblemJohnson} would also imply that the Novikov conjecture holds unconditionally. Indeed, if this problem has a positive answer then one can easily show that the  $\ell_2$-sum of $(\ell_\infty^{k})_k$ has Property (H) and, by a result of F. Baudier and G. Lancien, this space is Lipschitzly universal for all locally finite metric spaces (see \cite[Theorem 2.1]{BaudierLancien2008PAMS}); in particular, it contains all countable discrete groups coarsely.

It is worth mentioning that it is well known that the unit spheres of $\ell_\infty^k$ and $\ell_2^k$ cannot be Lipschitzly equivalent by maps with uniformly bounded distortion, i.e., maps $(F_k)_k$ such that $\sup_k\mathrm{Lip}(F_k)\mathrm{Lip}(F_k^{-1})<\infty$. Indeed, if this were the case, then Rademacher theorem would imply that the $(\ell_{\infty}^k)_k$ isomorphically embed with bounded distortion into $\ell_2$, which is not the case. \\

We shall now explain the main findings of this paper. In a nutshell,  this paper   provides the first partial results towards a negative answer for Problem \ref{ProblemJohnson}, which can also be seen as further evidence of the  plausibility of a negative answer for  Problem \ref{ProblemPropH}. 
Before delving into details, a comment is in place about our approach: on the one hand, we are not able to give a definitive negative answer for Problem \ref{ProblemJohnson} at the moment; as the reader will see below, our results  require some extra conditions on the maps $(F_k)_k$. On the other hand, our conclusions about the maps $(F_k)_k$ will be much stronger since we rule out not only the existence of a uniform bound for their Lipschitz constants, but we actually show the functions cannot be equi-uniformly continuous. Furthermore,   we also provide quantitative estimates for the dimensions of the spaces. Consequently, this ties Problems \ref{ProblemPropH} and \ref{ProblemJohnson} even closer together.

The next two theorems give a strong negative answer to Problem \ref{ProblemJohnson} for certain classes of maps. We start by properly defining these classes. For that, given $k\in \N$ and $x=(x_i)_{i=1}^k\in \R^k$, $\mathrm{supp}(x)$ denotes $\{i\in \{1,\ldots,k\}\mid x_i\neq 0\}$. Given $F\colon S_{\ell_\infty^k}\to S_{\ell_1^k}$, we write $F =(F_i)_{i=1}^k$; so, each $F_i$ is the composition of $F$ with the canonical projection of $\ell_1^k$ onto its $i$th coordinate.  
\begin{enumerate}[label=\Roman*]
    \item We say that a map $F\colon S_{\ell^k_\infty}\to S_{\ell_1^k}$   {\it{does not increase support size}} if  
  the cardinality of $\mathrm{supp}(F(x))$ is at most the one of  $\mathrm{supp}(x)$ for all  $ x\in S_{\ell_\infty^k}.$
    \item  We say that $F$ is  {\it{step preserving}}   if for all $x=(x_i)_{i=1}^k\in S_{\ell_\infty^k}$
and all $i,j\in \{1,\ldots, k\}$,  $x_i=x_j$ implies $F_i(x)=F_j(x)$. 
\end{enumerate}
We postpone until Section \ref{SectionExamples} a discussion of examples of maps   satisfying  these properties. For now,  we just point out the obvious:  (1) the class of maps which do not increase support size includes all {\it support preserving maps}, i.e.,  maps such that $\mathrm{supp}(F(x))=\mathrm{supp}(x)$ for all $x\in S_{\ell_\infty^k}$, and (2) the class of  step preserving maps includes all maps which are equivariant with respect to basis permutation (see more details in Section \ref{SectionExamples} below).
  
We obtain the following theorem for functions which do not increase support size. Recall that, for a map $F\colon (X,d_X)\to (Y,d_Y)$ between metric spaces, its {\it modulus of uniform continuity} is given by 
\[\omega_F(t)=\sup\left\{d_Y(F(x),F(y))\mid x,y\in X, \: d_X(x,y)\leq t\right\}, \text{ for }\ t\in [0,\infty).\]
We say that a sequence of maps $(F_k\colon X_k\to Y_k)_k$ between metric spaces is {\it equi-uniformly continuous} if
\[\forall\eps>0, \ \exists \delta>0\text{ such that } \omega_{F_k}(\delta)<\eps,\ \text{for all}\  k\in\N.\]

\begin{theorem}\label{Thm.Intro.Supp.Card.Preserv}
Let $d \in \N$. Then for any $k \geq 19^{2d-1}+1$ and any homeomorphism $F:S_{\ell_\infty^k}\to S_{\ell_1^k}$ that does not increase support size, we have $\omega_F(\frac1d)\ge \frac12$. In particular, there is no sequence $(F_k:S_{\ell_\infty^k}\to S_{\ell_1^k})_{k=1}^\infty$ of equi-uniformly continuous homeomorphisms that do not increase support size.
\end{theorem}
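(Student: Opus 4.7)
The natural approach is proof by contradiction, inducting on $d$. Suppose $F:S_{\ell_\infty^k}\to S_{\ell_1^k}$ is a support-non-increasing homeomorphism with $\omega_F(1/d)<1/2$ and $k\geq 19^{2d-1}+1$. I would first normalize: the support-$1$ points of either sphere are exactly the $2k$ coordinate vectors $\pm e_i$, and since $F$ is injective with image contained in this finite set, it restricts to a bijection $\{\pm e_i\}\to\{\pm e_i\}$. Post-composing with a signed coordinate permutation isometry of $\ell_1^k$, I may assume $F(e_i)=e_i$ for $i=1,\ldots,k$, while $F(-e_i)=-e_{\tau(i)}$ for some permutation $\tau$. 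The base case $d=1$ is then immediate: $\|e_1-e_2\|_\infty=1$ forces $\|F(e_1)-F(e_2)\|_1<1/2$ by the modulus assumption, but this distance equals $\|e_1-e_2\|_1=2$ after normalization.

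For $d\geq 2$, the key tool is a secondary-coordinate function $\Phi:\{(i,j):i\neq j\}\to\{1,\ldots,k\}$ extracted as follows. The point $x_{i,j}:=e_i+(1/d)e_j$ lies in $S_{\ell_\infty^k}$ with support $\{i,j\}$ and $\|x_{i,j}-e_i\|_\infty=1/d$, so by the support hypothesis and the modulus bound, $F(x_{i,j})$ has support of size $\leq 2$ and satisfies $\|F(x_{i,j})-e_i\|_1<1/2$. A direct elementary computation then forces $F(x_{i,j})=a_{i,j}e_i+b_{i,j}e_{\Phi(i,j)}$ with $a_{i,j}>3/4$ and $|b_{i,j}|<1/4$. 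The plan for the inductive step is to apply a Ramsey-type pigeonhole argument to the coloring of ordered pairs $(i,j)$ by $\Phi(i,j)$ (augmented with data from $\tau$ and the analogous perturbation analysis at $-e_i$), extracting from $k\geq 19^{2d-1}+1$ a structured subset $S\subseteq\{1,\ldots,k\}$ of size $\geq 19^{2(d-1)-1}+1$ on which $\Phi$ is closed and coherent. Restricting and reparameterizing would then produce a support-non-increasing homeomorphism $F':S_{\ell_\infty^{|S|}}\to S_{\ell_1^{|S|}}$ with $\omega_{F'}(1/(d-1))<1/2$, where the improved scale $1/(d-1)$ emerges from composing two $1/d$-perturbations within the coherent subset $S$; this would contradict the inductive hypothesis.

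I expect the main obstacle to be the reduction step: constructing the genuine homeomorphism $F'$ between the sub-spheres $S_{\ell_\infty^S}$ and $S_{\ell_1^S}$ from the restriction $F|_{S_{\ell_\infty^S}}$, whose image a priori lies in the larger union of low-support points in $S_{\ell_1^k}$ rather than in a single coordinate sub-sphere $S_{\ell_1^S}$. Careful combinatorial-topological control of the subset $S$ (so that the image actually collapses into $S_{\ell_1^S}$) and of the resulting modulus of continuity (so that two successive perturbations at scale $1/d$ are witnessed as one perturbation at scale $1/(d-1)$) will be essential. The explicit constant $19^2$ lost per inductive level presumably reflects the quantitative Ramsey-theoretic cost of establishing this coherent structure.
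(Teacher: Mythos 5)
Your proposal takes a genuinely different route from the paper (induction on $d$ plus a Ramsey-type extraction, versus the paper's reduction to support-preserving maps, then averaging over permutations to get step preservation, then a separation lemma on staircase vectors $z(\bar m)$), but it has a critical gap precisely where you flag it, and the gap is not a repair-in-the-margins technicality: it is the whole argument.

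Concretely, the step ``Restricting and reparameterizing would then produce a support-non-increasing homeomorphism $F':S_{\ell_\infty^{|S|}}\to S_{\ell_1^{|S|}}$ with $\omega_{F'}(1/(d-1))<1/2$'' is exactly what is missing. First, the restriction $F|_{S_{\ell_\infty^k}(S)}$ has no reason to land in $S_{\ell_1^k}(S)$ just because $F$ does not increase support size: a point supported on $S$ can be sent to a point supported on a completely different set of the same (or smaller) cardinality. The paper resolves exactly this issue, but not by combinatorics --- it uses Brouwer's invariance of domain (Proposition~\ref{invariance of domain} and Lemma~\ref{LemmaCardOfSupportPreservingToSupportPreserving}) to prove that a homeomorphism that does not increase support size is, after composing with a permutation, genuinely support preserving and therefore \emph{does} map each coordinate subsphere onto itself. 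Without a topological input of this kind, the pigeonhole coloring $\Phi$ cannot force the image to collapse into $S_{\ell_1^k}(S)$: $\Phi$ only records the second support coordinate for the two-parameter perturbations $x_{i,j}$, and a Ramsey-homogeneous set for $\Phi$ gives no control on $F$ at points of support size $\geq 3$. Second, even if one had the sub-sphere homeomorphism $F'$, the claim that ``two successive perturbations at scale $1/d$ are witnessed as one perturbation at scale $1/(d-1)$'' is not justified; there is no mechanism in your setup converting $\omega_F(1/d)<1/2$ into $\omega_{F'}(1/(d-1))<1/2$, and indeed the quantity $1/2$ is not something you can afford to degrade in an induction, so a modulus-improvement step would need to be exact, not asymptotic.

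One more mismatch worth noting: the dimensional bound $19^{2d-1}+1$ in the statement comes from the paper's geometric-sequence construction (Remark~\ref{rmk:AssumptionAl1}: $a=\lceil 8/\eps+3\rceil=19$ at $\eps=1/2$, and one needs $k>k_{2d}=19^{2d-1}$), not from Ramsey theory. The Ramsey numbers for colorings of pairs by $k$ colors grow much faster than $19^{2}$ per level, so even if the combinatorial extraction could be made to work, it would not yield the stated bound. If you want to salvage the structure of your argument, the piece you must add first is precisely Lemma~\ref{LemmaCardOfSupportPreservingToSupportPreserving} (or its topological content), after which the Ramsey machinery becomes unnecessary: once $F$ is support preserving, the symmetrization in Lemma~\ref{LemmaSupportPreservingToPermutationPreserving} plus the separation Lemma~\ref{Lemma.4.1.Simple} give the result directly.
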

 
The proof of Theorem \ref{Thm.Intro.Supp.Card.Preserv} passes first through obtaining the same conclusion for support preserving maps. In this case, the map $F$ does not even need to be continuous: we show that no sequence of support preserving maps from the spheres of $\ell_\infty^k$ to the ones of $\ell_1^k$ can be equi-uniformly continuous (see Theorem \ref{Thm.Main}). Theorem \ref{Thm.Intro.Supp.Card.Preserv} is then obtained by showing that, if the map $F$ is furthermore assumed to be a homeomorphism, then, up to a permutation operator, it is in fact support preserving (see Lemma \ref{LemmaCardOfSupportPreservingToSupportPreserving}). For both these results, it will be necessary to apply an averaging procedure which makes the map  equivariant with respect to basis permutation and, in particular, also step preserving (see Section \ref{SectionExamples} and Lemma \ref{Lemma.Equivariant.Injective} for details). For this, Brouwer's invariance of domain theorem and its consequence that continuous injective maps between spheres of $k$-dimensional spaces is surjective will be crucial. 
 
We obtain the following theorem for  step preserving functions.   

\begin{theorem}\label{Thm.Intro.Homeo.Equiv.Intro}
Let $d \in \N$. Then for any  $k \geq 19^{2d-1}+1$ and any step preserving continuous map   $F:S_{\ell_\infty^k}\to S_{\ell_1^k}$ such that $F(1,\ldots,1)\neq F(-1,\ldots,-1)$, we have $\omega_F(\frac1d)\ge \frac12$. In particular, there is no sequence $(F_k:S_{\ell_\infty^k}\to S_{\ell_1^k})_{k=1}^\infty$ of equi-uniformly continuous step preserving homeomorphisms.
\end{theorem}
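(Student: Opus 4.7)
Suppose for contradiction that $F\colon S_{\ell_\infty^k}\to S_{\ell_1^k}$ is step preserving, continuous, satisfies $F(\mathbf{1})\neq F(-\mathbf{1})$, and has $\omega_F(1/d)<1/2$, where $\mathbf{1}:=(1,\dots,1)$ and $k\geq 19^{2d-1}+1$. The first step is to pin down $F$ at the extreme points. Since all coordinates of $\mathbf{1}$ agree, step preservation forces $F(\mathbf{1})$ to be a multiple of $\mathbf{1}$, and $\|F(\mathbf{1})\|_1=1$ then gives $F(\mathbf{1})=\varepsilon\mathbf{1}/k$ for some $\varepsilon\in\{\pm 1\}$; likewise $F(-\mathbf{1})\in\{\pm\mathbf{1}/k\}$, and the hypothesis that these images are distinct forces them to be antipodal. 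After replacing $F$ by $-F$ if necessary, I may assume $F(\mathbf{1})=\mathbf{1}/k$ and $F(-\mathbf{1})=-\mathbf{1}/k$, so that $\|F(\mathbf{1})-F(-\mathbf{1})\|_1=2$.

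Next, I would introduce the piecewise-linear coordinate-flipping path $\Gamma\colon[0,k]\to S_{\ell_\infty^k}$ defined so that at time $s=j+u$, with $j\in\{0,\dots,k-1\}$ and $u\in[0,1]$, its first $j$ coordinates equal $-1$, its $(j{+}1)$-st coordinate equals $1-2u$, and its last $k-j-1$ coordinates equal $+1$. A direct check shows $\Gamma$ is $2$-Lipschitz in the $\ell_\infty$-metric, so any sub-arc of $s$-length at most $1/(2d)$ has $\ell_\infty$-diameter at most $1/d$, and the assumption $\omega_F(1/d)<1/2$ translates to $\|F(\Gamma(s))-F(\Gamma(s'))\|_1<1/2$ whenever $|s-s'|\leq 1/(2d)$. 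For non-integer $s$, the vector $\Gamma(s)$ has exactly three distinct coordinate values, so step preservation produces a decomposition $F(\Gamma(s))=a_j(u)\mathbf{1}_{[1,j]}+c_j(u)e_{j+1}+b_j(u)\mathbf{1}_{[j+2,k]}$ with continuous scalar functions $a_j,b_j,c_j\colon[0,1]\to\R$, subject to the $\ell_1$-normalization $j|a_j(u)|+|c_j(u)|+(k-j-1)|b_j(u)|=1$ and the matching conditions $c_j(0)=b_j(0)$, $c_j(1)=a_j(1)$, both forced by step preservation at the integer times when two of the three levels merge. The composition $F\circ\Gamma$ is then a continuous path in $S_{\ell_1^k}$ from $\mathbf{1}/k$ to $-\mathbf{1}/k$, whose total $\ell_1$-variation is at least $2$.

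The heart of the argument is to exhibit a single pair $x,y\in S_{\ell_\infty^k}$ with $\|x-y\|_\infty\leq 1/d$ and $\|F(x)-F(y)\|_1\geq 1/2$, contradicting the assumed modulus bound. A naive pigeonhole that splits $[0,k]$ into $2dk$ sub-intervals of $s$-length $1/(2d)$ only distributes the $2$ units of $\ell_1$-variation as $1/(dk)$ per interval and is useless; the proof must exploit the rigidity of the three-level structure, where a scalar change of size $\delta$ in $a_j$ or $b_j$ contributes $j\delta$ or $(k-j-1)\delta$ to the $\ell_1$-displacement, so that large displacements over short sub-intervals require large-multiplicity levels. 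Because the threshold $k\geq 19^{2d-1}+1$ exactly matches the one appearing in Theorem~\ref{Thm.Intro.Supp.Card.Preserv}, I expect the proof to share the combinatorial core of that result---an iterated scale-selection or Ramsey-type mechanism in which each of $2d-1$ scales costs a multiplicative factor of $19$ to refine the location of the mass of the $\ell_1$-variation. The main obstacle is precisely executing this iteration: one must track how the scalars $a_j,b_j,c_j$ propagate from $j=0$ to $j=k$ under the global antipodal constraint $F(\mathbf{1})=-F(-\mathbf{1})$, and extract from the resulting system the explicit pair $(x,y)$ that produces the contradiction.
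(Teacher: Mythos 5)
Your first step is exactly right and matches the paper: step preservation forces $F(\mathbf{1})=\pm\mathbf{1}/k$ and $F(-\mathbf{1})=\pm\mathbf{1}/k$, and the hypothesis $F(\mathbf{1})\neq F(-\mathbf{1})$ plus a sign flip lets you normalize to $F(\mathbf{1})=\mathbf{1}/k$, $F(-\mathbf{1})=-\mathbf{1}/k$. You have also correctly identified that the role of continuity is limited to an intermediate-value argument along some path from $\mathbf{1}$ to $-\mathbf{1}$. But after that the proposal diverges from what works, and you candidly admit you cannot close it.

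The gap is the choice of path and, more fundamentally, the shape of the contradiction pair. Your coordinate-flipping path $\Gamma$ keeps only three levels at any time, which destroys the structure needed; you then try to distribute $2$ units of $\ell_1$-variation along a single path, notice the naive pigeonhole fails, and speculate about a Ramsey-type scale-selection that isn't there. The paper instead fixes a partition $P\sqcup P^c=\N$ balanced with respect to $\|\cdot\|_1$, picks two interlaced tuples $\bar m,\bar n\in[M]^d$ with $m_1<n_1<\cdots<m_d<n_d<k$ and $M$ growing geometrically (this is where $19^{2d-1}+1$ comes from), and builds the ``double staircase'' $y(\bar m)=\sum_s(1-\tfrac{s-1}{d})1_{(m_{s-1},m_s]\cap P}+\sum_s(-1+\tfrac{s-1}{d})1_{(m_{s-1},m_s]\cap P^c}$, with zero tail on $(m_d,k]$. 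The relevant path is the two-segment path $\mathbf{1}\to y(\bar m)\to-\mathbf{1}$: every point on it is constant on each block $(m_{s-1},m_s]\cap P$, $(m_{s-1},m_s]\cap P^c$, and on the tail $(m_d,k]$, so $F$ is constant there too, and the intermediate value theorem applied to the scalar tail coordinate of $F\circ\varphi_{\bar m,k}$ gives a point $x(\bar m,u,k)$ on the $\bar m$-path whose image under $F$ vanishes on the tail. The contradiction pair is then \emph{not} two points on one path: it is $x(\bar m,u,k)$ and the analogously built $x(\bar n,u,k)$ with the \emph{same} $u$. Interlacing with step $1/d$ gives $\|x(\bar m,u,k)-x(\bar n,u,k)\|_\infty\le 1/d$, and the paper's key Lemma~\ref{Lemma.Separation} (which uses the geometric spread of $M$ and the upper/lower block estimates of $\ell_1$) gives $\|F(x(\bar m,u,k))-F(x(\bar n,u,k))\|_1\ge 1/2$. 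Without the $P/P^c$ double-staircase and the interlacing pair, there is no way to reproduce this; that is the missing idea.
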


Notice that, while Theorems \ref{Thm.Intro.Supp.Card.Preserv} and \ref{Thm.Intro.Homeo.Equiv.Intro} regard maps from the unit spheres of $\ell_\infty^k$ onto the ones of $\ell_1^k$, Problem \ref{ProblemJohnson} deals with maps taking values in the unit sphere of $\ell_2^k$ instead. This is however not an issue since  the Mazur maps introduced above give  uniform heomeomorphisms between  the unit spheres of $\ell_1$ and $\ell_2$ which are simultaneously support and step preserving. So, the second parts of both of these theorems remain true if one replaces $\ell_1^k$ by $\ell_2^k$ in their statements.
 
Compared with the proof of Theorem \ref{Thm.Intro.Supp.Card.Preserv}, the proof of Theorem \ref{Thm.Intro.Homeo.Equiv.Intro} contains only a minimal topological component, relying on the intermediate value theorem. The main new lemma, on which both results depend, is Lemma \ref{Lemma.Separation}. 


 Our methods also give rise to a new concentration inequality. The study of concentration inequalities in nonlinear geometry of Banach spaces started with  N.\ Kalton's introduction to Property $\mathcal Q$ (\cite[Section 4]{Kalton2007QJM}) and, right after,  with Kalton and N. Randrianarivony's theorem about the concentration of embeddings  of the Hamming graphs into asymptotically uniformly smooth Banach spaces (\cite[Theorem 4.2]{KaltonRandrianarivony2008MathAnnalen}). The investigation of these sort of phenomena has quickly become an important topic in Banach space geometry; we refer the reader to \cite{KaltonRandrianarivony2008MathAnnalen,LancienRaja2018Houston,BaudierLancienSchlumprecht2018,Braga2021Jussieu,BaudierLancienMotakisSchlumprecht2021JIMJ,Fovelle2025JFA}
 for some of the growing literature about concentration inequalities on Banach spaces and their relation to nonlinear Banach space geometry. However, in contrast to the papers just cited, our concentration inequality has  a different flavor since it is not between infinite dimensional Banach spaces, but rather about the asymptotic behavior of finite dimensional spaces whose dimensions are increasing. 
 
In order to state our concentration inequality precisely, we shall define the positive part of the spheres: \[S^+_{\ell_p^k}=\left\{(x_i)_{i=1}^k\in S_{\ell_p^k}\mid x_i\geq 0 \text{ for all }\ i\in \{1,\ldots, k\}\right\},\ \text{ for } \ p\in \{1,\infty\},\]  and, for $0=m_0<m_1<\ldots<m_d<k$ in $\N$, we write $\bar m=(m_1,\ldots,m_d)$ and 
\[z(\bar m)=\sum_{s=1}^d\left(1-\frac{s-1}{d}\right)1_{(m_{s-1},m_s]} \in S_{\ell_\infty^k}^+.\]

\begin{theorem}\label{ThmConcentration.LastSection.Intro}
Fix   $d\in \N$ and $\eps > 0$. Let $Q = \{a^{2j-1}\}_{j=1}^\infty \subseteq \N$, where  $a= \lceil 32/\eps + 2 \rceil$. Then, for all $m_1<\ldots< m_d<k\in Q$  and all step preserving maps $F:S_{\ell_\infty^k}^+ \to S_{\ell_1^k}^+$ with $\omega_F(\frac{1}{d})\le \frac{\eps}{8}$, we have
\[\Big\|F(z(\bar m))-\frac{1}{k}\sum_{i=1}^k e_i\Big\|_1\le \eps,\]
where $(e_i)_{i=1}^k$ denotes the standard basis of $\ell_1^k$. 
\end{theorem}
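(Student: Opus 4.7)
The plan is to use step-preservation to reduce $F(z(\bar m))$ to a $(d+1)$-tuple of plateau-values and then combine the uniform continuity hypothesis with Lemma \ref{Lemma.Separation} to control these values with a bound independent of $d$. With $I_s=(m_{s-1},m_s]$ for $s\leq d$ and $I_{d+1}=(m_d,k]$, step-preservation forces $F(z(\bar m))=\sum_{s=1}^{d+1}c_s 1_{I_s}$ for some $c_s\geq 0$ with $\sum_s c_s|I_s|=1$, and applied to the constant vector $(1,\ldots,1)\in S^+_{\ell_\infty^k}$, whose coordinates are all equal, it immediately gives $F(1,\ldots,1)=\frac{1}{k}\sum_{i=1}^k e_i$, which is the target.

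The core local estimate comes from ``merging'' perturbations: lowering the value on plateau $s$ by $1/d$ produces a step vector at $\ell_\infty$-distance exactly $1/d$ from $z(\bar m)$ that is constant on $I_s\cup I_{s+1}$. Applying $\omega_F(1/d)\leq\eps/8$ and step-preservation yields $|c_s-c_{s+1}|\min(|I_s|,|I_{s+1}|)\leq\eps/8$, and since the choice $m_s\in Q=\{a^{2j-1}\}$ enforces $|I_{s+1}|/|I_s|\gg 1$, the minimum is $|I_s|$, so $|c_s-c_{s+1}|\leq \eps/(8|I_s|)$; telescoping using the geometric growth of $|I_s|$ as a convergent geometric series yields $|c_s-c_{d+1}|\leq C\eps/m_s$ for an absolute constant $C$. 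Plugging this together with an anchor on $c_{d+1}$ (provided by Lemma \ref{Lemma.Separation}) into the decomposition
\[\Big\|F(z(\bar m))-\tfrac{1}{k}\sum_{i=1}^k e_i\Big\|_1=|c_{d+1}-1/k|(k-m_d)+\sum_{s=1}^d |c_s-1/k||I_s|,\]
and using $\sum_{s\leq d}|I_s|\leq m_d\leq k/a^2$, one is left with a geometric series in $a$ that is bounded above by $\eps$ as soon as $a\geq 32/\eps+2$.

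The main obstacle, and the reason Lemma \ref{Lemma.Separation} is needed, is that a naive linear interpolation from $(1,\ldots,1)$ to $z(\bar m)$ along a path of $d$ steps of $\ell_\infty$-size $1/d$ yields only $\|F(z(\bar m))-\frac{1}{k}\sum_i e_i\|_1\leq d\eps/8$ by triangle inequality, which exceeds $\eps$ as soon as $d\geq 9$; moreover any $\ell_\infty$-path from $(1,\ldots,1)$ to $z(\bar m)$ has length $\geq 1$, so no path-based triangle inequality can escape this $d$-factor. The resolution exploits the rigidity of step-preserving maps together with the rapid geometric growth of $Q$, which makes significant mass transfer between plateaus ``visible'' through many merging perturbations at once; Lemma \ref{Lemma.Separation} is what quantifies this rigidity and produces a $d$-independent anchor for $c_{d+1}$.
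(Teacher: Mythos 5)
There is a genuine gap in the telescoping step, and the lemma you invoke is the wrong one. Your merging estimate is fine as a one-step bound: perturbing $z(\bar m)$ by lowering plateau $s$ to the value of plateau $s+1$ gives $\|F(z(\bar m))-F(z')\|_1\leq\omega_F(1/d)\leq\eps/8$, and since $F(z')$ is constant on $I_s\cup I_{s+1}$ one indeed gets $|c_s-c_{s+1}|\min(|I_s|,|I_{s+1}|)\leq\eps/8$, hence $|c_s-c_{s+1}|\leq (\eps/8)/|I_s|$. But when you telescope and then plug back into the target sum, you lose the $d$-factor you set out to avoid. Indeed $|c_s-c_{d+1}|\leq\sum_{t=s}^d(\eps/8)/|I_t|\lesssim\eps/m_s$ (the geometric tail saves you here, as you say), yet in the end
\[
\sum_{s=1}^d|c_s-c_{d+1}|\,|I_s|\ \lesssim\ \eps\sum_{s=1}^d\frac{|I_s|}{m_s}\ =\ \eps\sum_{s=1}^d\Big(1-\frac{m_{s-1}}{m_s}\Big)\ \geq\ \eps\, d\,\Big(1-\frac{1}{a^2}\Big),
\]
so the conclusion is $O(d\eps)$, not $O(\eps)$. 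Each individual term $|c_s-c_{d+1}|\,|I_s|$ is of size $\approx\eps$, and there are $d$ of them; the geometric growth of the $m_s$'s makes the increments $|c_s-c_{s+1}|$ small but simultaneously makes the interval lengths $|I_s|$ large, and these exactly cancel. So the same $d$-loss you correctly identified for a naive path from $(1,\ldots,1)$ to $z(\bar m)$ reappears in the merge-and-telescope version; the ``rapid geometric growth of $Q$'' does not remove it.

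Second, Lemma~\ref{Lemma.Separation} cannot provide the anchor on $c_{d+1}$. Its hypothesis requires $F_i(x(\bar m,u,k))=0$ for all $i\in(m_d,k]$, which is not available for $z(\bar m)$ here (indeed $c_{d+1}\neq 0$ is precisely what you are trying to locate), and its conclusion is a separation bound $\|F(x(\bar m,u,k))-F(x(\bar n,u,k))\|>1-\eps$, which pins nothing about $c_{d+1}$. The paper's mechanism is different and avoids any telescoping. It compares $F(z(\bar m))$ to $F(z(\bar n))$ for a single interlacing $\bar n$ (so $\|z(\bar m)-z(\bar n)\|_\infty\leq 1/d$), splits $\sum_s\alpha_s(\bar m)1_{(m_{s-1},m_s]}$ along the interlacing gaps $(m_{s-1},n_{s-1}]$ and $(n_{s-1},m_s]$, and uses Lemma~\ref{Lemma.Bounds.Max.Alpha.Beta(bis)} --- not Lemma~\ref{Lemma.Separation} --- to show that the gap contributions from $F(z(\bar m))$ on $(m_{s-1},n_{s-1}]$ and from $F(z(\bar n))$ on $(n_{s-1},m_s]$ each have $X$-norm at most $\eps/16$; adding $\|F(z(\bar m))-F(z(\bar n))\|\leq\omega_F(1/d)$ on the common blocks $(n_{s-1},m_s]$ then gives $\|\sum_s\alpha_s(\bar m)1_{(m_{s-1},m_s]}\|_X\leq\eps/4$ in one stroke. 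Finally the anchor on the tail coefficient is forced by $\|F(z(\bar m))\|_X=1$ together with that $\eps/4$ bound, which pins $\gamma(\bar m,k)\|1_{(m_d,k]}\|_X\in[1-\eps/4,1]$; no separation lemma is used.

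Your observation that $F(1,\ldots,1)=\frac1k\sum_i e_i$ by step-preservation is correct but unused in the actual argument, and your framing of the decomposition $\|F(z(\bar m))-\frac1k\sum e_i\|_1=|c_{d+1}-1/k|(k-m_d)+\sum_s|c_s-1/k|\,|I_s|$ is fine. But as it stands the proof does not close; the needed idea is the interlacing comparison with the gap estimate of Lemma~\ref{Lemma.Bounds.Max.Alpha.Beta(bis)}, not plateau-by-plateau telescoping.
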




In Section \ref{SectionConcentration}, we discuss the relationship of this concentration property with Kalton's Property $\mathcal Q$. Morally, when our concentration inequality is put into the more general context of Banach spaces with unconditional bases, it can be seen as a sort of {\it local Property $\mathcal Q$} (see Definition \ref{Defi.Local.Prop.Q}), and its negation characterizes the standard basis of $c_0$ among all unconditional bases (see Corollary \ref{Corollary.Charactc0LocalPropQ}). \\

 At last, we conclude the introduction with a comment on the exposition. By the proof of Odell and Schlumprecht’s \cite[Theorem 2.1]{OdellSchlumprecht1994Acta}, Theorems \ref{Thm.Intro.Supp.Card.Preserv}, \ref{Thm.Intro.Homeo.Equiv.Intro}, and \ref{ThmConcentration.LastSection.Intro} may be reformulated with $\ell_1$ replaced by any Banach space $X$ that has an unconditional basis and non-trivial cotype, though the dimensional estimates must then be adjusted. By not relying on this theorem, we prove Theorems \ref{Thm.Intro.Homeo.Equiv.Intro} and \ref{ThmConcentration.LastSection.Intro} for the much broader class of unconditional bases that are not equivalent to the standard basis of $c_0$. Throughout, we give particular attention to the case where the target space is $\ell_r$ for $1\leq r<\infty$. We have nonetheless chosen to present our main results here specifically for $\ell_1$ since (1) the statements are cleaner in this setting, and (2) the motivation for this work arose from Problems \ref{ProblemPropH} and \ref{ProblemJohnson}.

\section{Examples of support and step preserving maps}\label{SectionExamples}

This short section contains some examples of families of maps satisfying the hypotheses of Theorems \ref{Thm.Intro.Supp.Card.Preserv}, \ref{Thm.Intro.Homeo.Equiv}, and \ref{ThmConcentration.LastSection} as well as an insight into some of the main ideas needed to prove these results.

Arguably, the most natural map between the spheres of  $\ell_\infty^k$ and  $\ell_1^k$ is given by
\begin{equation}\label{Eq.Maps.1.SecPrelim}F_k\colon (x_i)_{i=1}^k\in S_{\ell_\infty^k}\mapsto\left(\frac{  x_i}{|x_1|+\ldots+|x_k|}\right)_{i=1}^k\in S_{\ell_1^k}.
\end{equation}
The family $(F_k)_k$ is clearly both support and step preserving. However,  it is immediate to check that they are not equi-uniformly continuous. We can see this as follows:   let $(e_i)_i$ denote the canonical unit vectors in $\R^\N$ and, for each  $k\in\N$ and  $\delta>0$, let \[x(k,\delta)=e_1+\delta\sum_{i=2}^ke_i.\] So,  viewing each $x(k,\delta)$ as an element in $S_{\ell_\infty^k}$, we have that  $\|e_1-x(k,\delta)\|_\infty=\delta$ for all $k\in\N$ while 
\[\|F_k(e_1)-F_k(x(k,\delta))\|_1\geq \left|1-\frac{1}{1+(k-1)\delta}\right|\overset{k\to \infty}{\longrightarrow} 1.\]

The definition in \eqref{Eq.Maps.1.SecPrelim} can accommodate more complicated maps. Indeed, if $\varphi=(\varphi_i)_i$ is a sequence of maps from  $[-1,1]$ to $ \R$, then
\begin{equation}\label{Eq.Maps.2.SecPrelim}F^\varphi_k\colon (x_i)_{i=1}^k\in S_{\ell_\infty^k}\mapsto\left(\frac{  \varphi_i(x_i) }{|\varphi_1(x_1)|+\ldots+|\varphi_k(x_k)|}\right)_{i=1}^k\in S_{\ell_1^k}
\end{equation}
is well defined as long as at least one of the $\varphi_i(x_i)$ is not zero for all  $(x_i)_{i=1}^k$ in  $S_{\ell_\infty^k}$. 
If  $\varphi_i(0)=0$ for all $i\in\N$, then the maps $(F^\varphi_k)_k$ do not increase support. On the other hand, if all $\varphi_i$'s are the same, then $(F^\varphi_k)_k$ are step preserving. While the maps in \eqref{Eq.Maps.1.SecPrelim} are clearly homeomorphisms between the spheres, the same does not hold for \eqref{Eq.Maps.2.SecPrelim} and extra assumptions on $\varphi$ are needed. 

At last, perhaps one of the most interesting types of homeomorphisms between the spheres of $\ell_\infty^k$ and $\ell_1^k$ is given as follows. Given a subset  $A$ of $\R$, let $1_A$ denote its characteristic function. Then, for each $k\in\N$ and each $i\in \{1,\ldots, k\}$, let  
\[F_{k,i}\left((x_j)_{j=1}^k\right)=\int_{0}^1\frac{1_{[r,1]}(x_i)}{\sum_{j=1}^k1_{[r,1]}(x_j)}dr\]
for all $(x_j)_{j=1}^k\in S_{\ell_\infty^k}$. We then let 
\[F_k(x)=\left(F_{k,i}(x)\right)_{i=1}^k\ \text{ for all }\ x\in S_{\ell_\infty^k}.\]
So, $(F_k)_k$ are also both support and step preserving.

Evidently, each $F_{k+1}$ extends $F_k$. This can be used to define a topological homeomorphism $F:S_{c_0}^+\to S_{\ell_1}^+$, which admits the following explicit description: for non-increasing $x=(x_n)_{n=1}^\infty\in S_{c_0}^+$ and $y=(y_n)_{n=1}^\infty\in S_{\ell_1}^+$,
\[F(x) = \Big(\sum_{i=j}^\infty \frac{1}{i}\,(x_i-x_{i+1})\Big)_{j=1}^\infty \text{ and } F^{-1}(y) = \Big(jy_j + \sum_{i=j+1}^\infty y_i\Big)_{j=1}^\infty.\]
Since $F$ is by definition permutation equivariant, these formulae admit the obvious modifications for arbitrary $x=(x_n)_{n=1}^\infty\in S_{c_0}^+$ and $y\in S_{\ell_1}^+$. To the best of our knowledge, an explicit homeomorphism $F:S_{c_0}^+\to S_{\ell_1}^+$ (which extends in an elementary way to the spheres, and even to the Banach spaces $c_0$ and $\ell_1$) does not appear in the literature. The same formulae work verbatim for $c_0(\Gamma)$ and $\ell_1(\Gamma)$, for an arbitrary set $\Gamma$. A modification of this construction also yields an explicit topological homeomorphism $F:S_{L_1}^+\to S_{L_\infty}^+$, where the latter is equipped with convergence in measure.

The claim that these maps represent interesting homeomorphisms is (informally) further substantiated because, in some sense, they seem to be the closest we can get homeomorphisms between the spheres of $\ell_\infty^k$ and $\ell_1^k$ to be equi-uniformly continuous. Without getting into too many details (they will not be worthwhile since our main results show that these maps cannot be equi-uniformly continuous anyway), let us just say that these maps are ``equi-uniformly continuous around many points''. For instance, this holds around points of the form $1_A$, $A\subseteq \{1,\ldots, k\}$, and for interlacing elements, i.e., elements $(x_i)_{i=1}^k$ and $(y_i)_{i=1}^k$ in $S_{\ell_\infty^k}$ satisfying 
\[x_1\geq y_1\geq \ldots\geq x_k\geq y_k.\]
The lack of equi-uniform continuity for the maps $(F_k)_k$ can be noticed however when looking at elements of the form 
\[\sum_{s=1}^d\left(1-\frac{s-1}{d}\right)1_{(m_{s-1},m_s]} \ \text{ and } \ \sum_{s=1}^d\left(1-\frac{s-1}{d}\right)1_{(n_{s-1},n_s]},\]
where $(n_1,\ldots, n_d)$ and $(m_1,\ldots,m_d)$ are $d$-tuples of naturals such that $0=m_0=n_0<m_1<n_1<\ldots<m_d<n_d$ and which are ``very spread out''. We emphasize here that  this realization was actually the first step towards the main results in this article, and their proofs, as the reader will see below, still bear similarities with this approach.

\section{The key lemma}\label{SectionMainLemma}
The main result of this section is  Lemma \ref{Lemma.Separation}. This lemma will be essential for the proofs of all of our main theorems.

As mentioned in the introduction, many of our results apply to maps from the spheres of $\ell_\infty^k$ to the spheres of the canonical pavings of a given $1$-unconditional basis. We shall present our results here in this generality: throughout, $(X,\|\cdot\|_X)$ will be a Banach space with a normalized, $1$-unconditional basis $(e_i)_i$\footnote{Recall, a (Schauder) basis $(e_i)_i$ for a  Banach space $X$ is {\it normalized} if $\|e_i\|_X = 1$ for all $i \in \{1,\ldots, n\}$ and {\it $1$-unconditional} if $\|\sum_{i=1}^na_n e_i\|\leq \|\sum_{i=1}^nb_ne_i\|$ for all $n\in \N$ and all $a_1,\ldots,a_n,b_1,\ldots,b_n\in \R$ with $|a_i|\leq |b_i|$ for all $i\in \{1,\ldots, n\}$.} which is not equivalent to the standard basis of $c_0$ and, for each $k\in\N$, we let 
\[X_k=\mathrm{span}\{e_1,\ldots, e_k\}.\] The reader interested only in functions from $S_{\ell_\infty^k}$ to $S_{\ell_1^k}$ can assume throughout that $X=\ell_1$, $ (e_i)_i$ is the canonical basis of $\ell_1$, and $X_k=\ell_1^k$ for all $k\in\N$.

  Recall, a sequence $(x_i)_{i=1}^n$ in  $ X$ is called a {\it block sequence} (with respect to  $(e_i)_i$) if there are $s_1<\ldots< s_{n+1}\in \N$ such that 
\[x_i\in \mathrm{span} \{e_j\mid s_i\leq j <s_{i+1}\}\ \text{ for all }\ i\in \{1,\ldots, n\}.\]
We fix two values $q\leq p\in[1,\infty]$, with $q<\infty$, for which $X$ satisfies {\it lower $p$ and upper $q$ estimates on block sequences with constant one}, that is, for every block sequence $(x_i)_{i=1}^n$ in $X$,
\begin{equation}\label{EqLowerUpperqpEstimates}\Big(\sum_{i=1}^n\|x_i\|_X^p\Big)^{1/p} \leq \Big\|\sum_{i=1}^nx_i\Big\|_X\leq \Big(\sum_{i=1}^n\|x_i\|_X^q\Big)^{1/q},
\end{equation}
with the obvious modifications when $p=\infty$. While this assumption holds automatically for the choice $q=1$ and $p=\infty$, the proximity of $p$ and $q$ plays a role in the dimensional estimates in our theorems. For instance, when $X = \ell_r$ and $(e_i)_i$ is its standard basis, we have $p=q=r$, which yields the strongest bounds. Throughout, we consider the function $\psi \colon \N\to [0,\infty)$ given by \[\psi(k)=\Big\|\sum_{i=1}^{k}e_i\Big\|_X\ \text{ for all }\ k\in\N.\] It follows from the $1$-unconditionality assumption of  $(e_i)_i$ that  $\psi$ is a non decreasing function and it follows from the assumption that $(e_i)_i$ is not equivalent to the standard $c_0$ basis that  $\lim_{k \to \infty}\psi(k)=\infty$. We linearly identify $\bigcup_k X_k$ with $c_{00}(\N)$ by mapping the basis $(e_i)_{i=1}^\infty$ to the standard basis of $c_{00}(\N)$ --- here $c_{00}(\N)$ denotes the space of finitely supported functions $\N\to \R$. Under under this identification, the formula for $\psi(k)$ becomes
\begin{equation*}
    \psi(k) = \|1_{[1,k]}\|_X.
\end{equation*}

We emphasize now a notation that will be used throughout the remainder of this article. If $F\colon S_{\ell_\infty^k}\to S_{X_k}$ is any map, we write $F=(F_i)_{i=1}^k$, meaning that each $F_i$ is the composition of $F$ with the real-valued map $x \in S_{X_k} \mapsto x_i\to \R$ giving the coefficient of $e_i$ in the basis expansion of $x$. Also, for each $d\in \N$ and each subset $M \subseteq \N$, we let 
\[[M]^d=\left\{(n_1,\ldots,n_d)\in M^d\mid n_1<\ldots<n_d\right\}.\]
So, $[M]^d$ is the set of subsets of $M$ with $d$ elements, and we write its elements as $d$-tuples in increasing order. 

Given a subset $P \subseteq \N$, we define the function $\psi_P: \N \to [0,\infty)$ by
\begin{equation*}
    \psi_P(k) := \min\left\{\|1_{[1,k]\cap P}\|_X,\|1_{[1,k]\cap P^c}\|_X\right\},
\end{equation*}
where $P^c $ denotes the complement of $P$ in $\N$. 
Note that, by $1$-unconditionality, $\psi_P(k)\leq \psi(k)$ for all $k\in\N$. In fact, $P$ can be chosen so that these terms are equivalent in the following sense:

\begin{lemma}\label{LemmaPartition}
There exists $P \subseteq\N$ such that, for all $k\in\mathbb{N}$,
\[\psi_P(k)\geq \frac{1}{2}(\psi(k)-1).\]
\end{lemma}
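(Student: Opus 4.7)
The plan is to construct $P$ greedily, index by index, always assigning $k \in \N$ to whichever of $P$ or $P^c$ currently has the smaller restriction norm. For each $k$, set $A_k = [1,k] \cap P$ and $B_k = [1,k] \cap P^c$, and write $a_k = \|1_{A_k}\|_X$, $b_k = \|1_{B_k}\|_X$ (with $a_0 = b_0 = 0$). At step $k$, declare $k \in P$ if $a_{k-1} \le b_{k-1}$, and $k \in P^c$ otherwise. The whole proposition reduces to a single quantitative claim about this construction.

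The key claim is that $|a_k - b_k| \le 1$ for every $k \in \N$, which I would prove by induction on $k$. Assume $|a_{k-1} - b_{k-1}| \le 1$, and without loss of generality $a_{k-1} \le b_{k-1}$, so $k$ is placed in $P$. Then $b_k = b_{k-1}$, while $1$-unconditionality (which gives $a_k \ge a_{k-1}$, since passing from $1_{A_k}$ to $1_{A_{k-1}}$ zeroes out one coordinate) and the triangle inequality (giving $a_k \le a_{k-1} + \|e_k\|_X = a_{k-1} + 1$) together place $a_k$ in the interval $[a_{k-1}, a_{k-1}+1]$. Therefore
\[a_k - b_k \le (a_{k-1} + 1) - b_{k-1} \le 1 \quad \text{and} \quad b_k - a_k \le b_{k-1} - a_{k-1} \le 1,\]
which closes the induction. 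The symmetric case $b_{k-1} < a_{k-1}$ is identical.

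Combining the claim with the triangle inequality $a_k + b_k \ge \|1_{A_k} + 1_{B_k}\|_X = \|1_{[1,k]}\|_X = \psi(k)$, I conclude that
\[\psi_P(k) = \min(a_k, b_k) = \frac{a_k + b_k - |a_k - b_k|}{2} \ge \frac{\psi(k) - 1}{2},\]
which is the desired inequality. I do not expect a genuine obstacle: the argument is the standard balanced-partition trick, powered only by $1$-unconditionality (to guarantee monotonicity when adding a coordinate) and the triangle inequality. The only minor subtlety is keeping straight that the lower bound on $a_k + b_k$ comes from the triangle inequality applied to $1_{A_k} + 1_{B_k}$, not from the lower $p$-estimate on block sequences — the two summands here are not block-disjoint but rather coordinate-disjoint supports within the same block.
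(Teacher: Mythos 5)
Your proof is correct and follows essentially the same greedy balanced-partition argument as the paper: both maintain the invariant $|a_k - b_k| \le 1$ by induction (using $1$-unconditionality for monotonicity and the triangle inequality for the $+1$), and both then combine it with $a_k + b_k \ge \psi(k)$. The only cosmetic difference is your decision rule (compare $a_{k-1}$ with $b_{k-1}$) versus the paper's (compare the norms after tentatively appending $e_k$ to each side); the two rules can produce different partitions, but both preserve the invariant by the same one-line induction, so this is not a substantive departure.
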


\begin{proof}
We algorithmically decide, for $k\in\N$, whether $k$ will be admitted into $P$ or into $P^c$. Initially, we admit $1$ into $P$ and $2$ into $P^c$. Assuming this has been decided for $1,\dots,k-1$, we admit $k$ into $P$ if
\[\|1_{([1,k-1]\cap P)\cup\{k\}}\|_X \leq  \|1_{([1,k-1]\cap P^c)\cup\{k\}}\|_X,\]
and otherwise we admit $k$ into $P^c$. This completes the definition of $P$ (and consequently of $P^c$).

Notice that
\begin{equation}\label{Eq.PPCatmost1}\Big|\|1_{[1,k]\cap P}\|_X-\|1_{[1,k]\cap P^c}\|\Big|\leq 1\end{equation}
for all $k\in\N$. This follows from induction on $k$: if $k=1$ or $k=2$, the inequality is trivial, suppose then that it holds for $k-1$ and let us show it also holds for $k$, assuming $k \geq 3$. If $k\in P$, then $k\not\in P^c$ and it follows from the definition of $P$ and the fact that $(e_i)_i$ is normalized that  
\[\|1_{[1,k]\cap P}\|_X-\|1_{[1,k]\cap P^c}\|_X\leq \|1_{([1,k-1]\cap P^c)\cup\{k\}}\|_X-\|1_{[1,k-1]\cap P^c}\|_X\leq \|e_{k}\|_X= 1.\]
On the other hand, as $(e_i)_i$ is $1$-unconditional, the inductive hypothesis gives 
\[\|1_{[1,k]\cap P^c}\|_X-\|1_{[1,k]\cap P}\|_X\leq \|1_{[1,k-1]\cap P^c}\|_X-\|1_{[1,k-1]\cap P}\|_X\leq 1.\]
In case $k\in P^c$ the argument is analogous.

As    $1_{[1,k]\cap P}+1_{[1,k]\cap P^c} = 1_{[1,k]}$,   \eqref{Eq.PPCatmost1} gives that \[\psi(k)\leq \|1_{[1,k]\cap P}\|_X+\|1_{[1,k]\cap P^c}\|_X\leq 2\|1_{[1,k]\cap P}\|_X+1. \]
So, 
\[\|1_{[1,k]\cap P}\|_X \geq \frac{1}{2}(\psi(k) - 1).\]
The same estimate holds with $P$ replaced by $P^c$.
\end{proof}

\begin{lemma}\label{Lemma.Bounds.Max.Alpha.Beta(bis)}
Let $\eps>0$ and $d\in\N$. Let $P \subseteq \N$ be a subset satisfying the conclusion of Lemma~\ref{LemmaPartition}, and let $M = \{k_j\}_{j=1}^\infty$ be a subset of $\N$ satisfying the growth condition
\begin{equation} \label{eq:Mgrowth}
 \psi(k_{j+1}) \geq \Big(\frac{8d^{1/q-1/p}}{\eps}+2\Big)\psi(k_j)+1.
\end{equation}
Then for all ${\bar m},{\bar n}\in[M]^d$ with $m_1<n_1<\ldots<m_d<n_d$ and all $(\lambda_s)_{s=1}^d$ in $\mathbb{R}$, the following implications hold: For each $Q\in \{P,P^c\}$,
\begin{align*}
 \Big\|\sum_{s=1}^d\lambda_s 1_{(n_{s-1},n_s]\cap Q}\Big\|_X \leq 1 &\implies \Big\|\sum_{s=1}^d \lambda_s1_{(n_{s-1},m_s]}\Big\|_X \leq \frac{\eps}{4}, \\
 \Big\|\sum_{s=1}^d\lambda_s 1_{(m_{s-1},m_s]\cap Q}\Big\|_X \leq 1 &\implies \Big\|\sum_{s=1}^d \lambda_s1_{(m_{s-1},n_{s-1}]}\Big\|_X \leq \frac{\eps}{4}.
\end{align*}
\end{lemma}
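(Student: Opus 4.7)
The plan is that both implications follow the same template, differing only in which pair of intervals plays the role of ``outer'' and ``inner''. I will describe the first implication in detail; the second is fully analogous.

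First, since $(1_{(n_{s-1},n_s]\cap Q})_{s=1}^d$ is a block sequence, the lower $p$ estimate in \eqref{EqLowerUpperqpEstimates} applied to the hypothesis gives
\[\sum_{s=1}^d A_s^p\le 1,\qquad \text{where } A_s:=|\lambda_s|\,\|1_{(n_{s-1},n_s]\cap Q}\|_X.\]
Next, I want to compare $\|1_{(n_{s-1},n_s]\cap Q}\|_X$ with $\|1_{(n_{s-1},m_s]}\|_X$. Writing $n_s=k_{\ell_s}$ and $m_s=k_{j_s}$, one has $j_s\le \ell_s-1$, and hence by $1$-unconditionality and Lemma~\ref{LemmaPartition},
\[\|1_{(n_{s-1},n_s]\cap Q}\|_X\ge \|1_{[1,n_s]\cap Q}\|_X-\|1_{[1,m_s]\cap Q}\|_X\ge \tfrac{1}{2}\bigl(\psi(k_{\ell_s})-1\bigr)-\psi(k_{\ell_s-1}).\]
Feeding the growth condition~\eqref{eq:Mgrowth} into the right-hand side yields
\[\|1_{(n_{s-1},n_s]\cap Q}\|_X\ge \tfrac{4d^{1/q-1/p}}{\eps}\,\psi(k_{\ell_s-1})\ge \tfrac{4d^{1/q-1/p}}{\eps}\,\|1_{(n_{s-1},m_s]}\|_X,\]
so the ratios $b_s:=\|1_{(n_{s-1},m_s]}\|_X/\|1_{(n_{s-1},n_s]\cap Q}\|_X$ are uniformly bounded by $\eps/(4d^{1/q-1/p})$.

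Finally, $(1_{(n_{s-1},m_s]})_{s=1}^d$ is itself a block sequence, so the upper $q$ estimate in \eqref{EqLowerUpperqpEstimates} together with the elementary power-mean inequality $\sum_{s=1}^d A_s^q\le d^{1-q/p}(\sum_{s=1}^d A_s^p)^{q/p}$ gives
\[\Big\|\sum_{s=1}^d\lambda_s 1_{(n_{s-1},m_s]}\Big\|_X^q\le \sum_{s=1}^d A_s^q b_s^q\le (\max_s b_s)^q\, d^{1-q/p}\le \Big(\tfrac{\eps}{4}\Big)^q,\]
which is the desired first implication. For the second implication, I would exchange the roles of $(m_s)$ and $(n_s)$: the analogous outer block is $(1_{(m_{s-1},m_s]\cap Q})_{s=1}^d$, and the truncation is $(1_{(m_{s-1},n_{s-1}]})_{s=1}^d$ (with the $s=1$ term vanishing since $(m_0,n_0]=\emptyset$). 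The corresponding key lower bound $\|1_{(m_{s-1},m_s]\cap Q}\|_X\ge \tfrac{4d^{1/q-1/p}}{\eps}\,\psi(n_{s-1})$ then comes from applying~\eqref{eq:Mgrowth} to the pair $k_{\ell_{s-1}},k_{\ell_{s-1}+1}$, using $\ell_{s-1}+1\le j_s$ and monotonicity of $\psi$.

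The only delicate point I anticipate is the index bookkeeping: one must carefully track the indices $j_s,\ell_s$ attached to $m_s=k_{j_s}$, $n_s=k_{\ell_s}$ and verify that~\eqref{eq:Mgrowth} is being applied between a pair of consecutive elements of $M$ that sits inside the appropriate ``gap'' separating the support of the truncated block from the endpoints appearing in the hypothesis. Once that is set up, the rest is a routine combination of the $p/q$-estimates, $1$-unconditionality, and Lemma~\ref{LemmaPartition}.
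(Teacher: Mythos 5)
Your proof is correct and follows essentially the same route as the paper's: you combine the lower $p$ estimate on the outer block, the upper $q$ estimate on the truncated block, the ratio bound $b_s\le \eps/(4d^{1/q-1/p})$ coming from Lemma~\ref{LemmaPartition} together with the growth condition, and the power-mean inequality $\sum_s A_s^q\le d^{1-q/p}(\sum_s A_s^p)^{q/p}$. The paper organizes the same ingredients as a single top-down chain of inequalities starting from $\|\sum_s \lambda_s 1_{(n_{s-1},m_s]}\|_X$, whereas you work forward from the hypothesis; the content and the index bookkeeping (including the vanishing $s=1$ term in the second implication) agree.
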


\begin{proof}
First notice that, by Lemma \ref{LemmaPartition} and the growth condition on $(\psi(k_j))_{j=1}^\infty$, we have that, for   all $\ell<m<n\in M$,
\[\frac{\psi(m)}{\psi_P(n) - \psi(\ell)}\leq \frac{\psi(m)}{\psi_P(n) - \psi(m)} \leq \frac{2\psi(m)}{\psi(n)-1-2\psi(m)} \leq \frac{\varepsilon}{4d^{1/q-1/p}}.\] 
We only demonstrate the first implication; the second is established similarly by replacing $n_{s-1}<m_s<n_s$ with $m_{s-1}<n_{s-1}<m_s$.

Using that $(e_i)_i$ is $1$-unconditional and the lower $p$ and upper $q$ estimates on block sequences satisfied by $(e_i)_i $ (see   \eqref{EqLowerUpperqpEstimates}), we have
\begin{align*}
\Big\|\sum_{s=1}^d\lambda_s1_{(n_{s-1},m_s]}\Big\|_X &\leq \Big(\sum_{s=1}^d \|\lambda_s1_{(n_{s-1},n_s]\cap Q}\|_X^q\Big(\frac{\|1_{(n_{s-1},m_s]}\|_X}{\|1_{(n_{s-1},n_s]\cap Q}\|_X}\Big)^q\Big)^{1/q}\\
&\leq  \Big(\sum_{s=1}^d \|\lambda_s1_{(n_{s-1},n_s]\cap Q}\|^q_X\Big(\frac{\psi(m_s)}{\psi_P(n_s)-\psi(n_{s-1})}\Big)^q\Big)^{1/q}\\
&\leq\frac{\varepsilon}{4d^{1/q-1/p}} \Big(\sum_{s=1}^d \|\lambda_s1_{(n_{s-1},n_s]\cap Q}\|_X^q\Big)^{1/q}\\
&\leq \frac{\varepsilon}{4}\Big(\sum_{s=1}^d\|\lambda_s1_{(n_{s-1},n_s]\cap Q}\|_X^p\Big)^{1/p}\\
& \leq \frac{\varepsilon}{4}\Big\|\sum_{s=1}^d\lambda_s 1_{(n_{s-1},n_s]\cap Q}\Big\|_X\\
&\leq \frac{\eps}{4}
\end{align*}
(here we use that $q\leq p$ and, consequently, $d^{1/q-1/p}\geq 1$).
\end{proof}

For $d\in\N$ and $\eps>0$, let us say that subsets $P \subseteq \N$ and $M = \{k_j\}_{j=1}^\infty \subseteq \N$ satisfy {\bf Assumption} $\boldsymbol{A(d,\eps)}$ if
\begin{align} \label{AssumptionA}
    \bullet &\text{ for all }k\in\mathbb{N},\; \psi_P(k)\geq 1/2(\psi(k) - 1), \\
    \bullet &(k_j,k_{j+1}]\cap P \neq \emptyset \text{ and } (k_j,k_{j+1}]\cap P^c\neq \emptyset \text{ for all } j \geq 1, \text{ and} \notag \\
    \bullet &M \text{ satisfies the growth hypothesis \eqref{eq:Mgrowth} of Lemma~\ref{Lemma.Bounds.Max.Alpha.Beta(bis)}:} \notag\\
    &\hspace{.5in}  \psi(k_{j+1}) \geq \Big(\frac{8d^{1/q-1/p}}{\eps}+2\Big)\psi(k_j)+1. \notag
\end{align}
Such a set $P$ always exists by Lemma~\ref{LemmaPartition}, and such a set $M$ always exists since $\lim_{k\to\infty} \psi(k) = \infty$ and since $P,P^c$ are (necessarily) infinite.

\begin{remark} \label{rmk:AssumptionAl1}
Note that if $X = \ell_r$, $1\leq r<\infty$,  with its standard basis, then $\psi_{2\N}(k) \geq \frac{1}{2}(\psi(k)-1)$. Hence, the choices $P = 2\N$ and
\begin{equation*}
    M = \left\{a^{j-1}\right\}_{j=1}^\infty, \hspace{.2in}\text{where}\ a := \lceil (8/\eps + 3)^r \rceil
\end{equation*}
satisfy {\bf Assumption} $\boldsymbol{A(d,\eps)}$. 
\end{remark}

Let us set some notation to describe elements of $S_{\ell_\infty^k}$ that play an essential role in our arguments. For $d\in \N$, $\eps >0$, $u=((a_s)_{s=1}^d,(b_s)_{s=1}^d,c) \in S_{\ell_\infty^{2d+1}}$, subsets $P \subseteq \N$ and $M \subseteq \N$ satisfying {\bf Assumption} $\boldsymbol{A(d,\eps)}$, if $\bar m \in [M]^d$ and $k>m_d$, we set
\begin{equation} \label{Definition.x.m.u.k}
    x(\bar m,u,k)=\sum_{s=1}^d a_s1_{(m_{s-1},m_s]\cap P}+\sum_{s=1}^d b_s1_{(m_{s-1},m_s]\cap P^c}+c1_{(m_d,k]} \in S_{\ell_\infty^k}.
\end{equation}
Note that $\|x(\bar m,u,k)\|_\infty$ indeed equals 1 since $\|u\|_{\infty} = 1$, $(m_{s-1},m_s] \cap P \neq \emptyset$,  $(m_{s-1},m_s] \cap P^c \neq \emptyset$ for all $s\in \{1,\dots d\}$, and $(m_d,k]\neq \emptyset$. Furthermore, denoting $a_{d+1}=b_{d+1} = c$, it is easy to see that for $\bar m,\bar n \in [\N]^d$ with $m_1<n_1<\cdots<m_d<n_d$, we have the distance bound
\begin{equation} \label{eq:d(interlacing)}
    \|x(\bar m,u,k) - x(\bar n,u,k)\|_\infty \leq \max_{1\leq s\leq d}\max\{|a_s - a_{s+1}|,|b_s-b_{s+1}|\}.
\end{equation}

The following is our key lemma.

\begin{lemma}\label{Lemma.Separation}
Let $d\in\N$ and $\eps > 0$, and let $P \subseteq \N$ and $M \subseteq \N$ be subsets satisfying {\bf Assumption} $\boldsymbol{A(d,\eps)}$. Let $\bar m,\bar n\in [M]^d$ such that $m_1<n_1<\ldots<m_d<n_d$, let $k> n_d$, let $u=((a_s)_{s=1}^d,(b_s)_{s=1}^d,c) \in S_{\ell_\infty^{2d+1}}$, and let $F = (F_i)_{i=1}^k: S_{\ell_\infty^k} \to S_{X_k}$ be a step preserving function with $F_{i}(x(\bar m,u,k))=0$ for all $i\in (m_d,k]$, where $x(\bar m,u,k) \in S_{\ell_\infty^k}$ is defined in \eqref{Definition.x.m.u.k}. Then 
\[\|F(x(\bar m,u,k))-F(x(\bar n,u,k))\|_X>1-\eps.\] 
\end{lemma}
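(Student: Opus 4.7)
Write $x=x(\bar m,u,k)$ and $x'=x(\bar n,u,k)$. The plan is to use step preservation to represent $F(x)$ and $F(x')$ as explicit piecewise-constant vectors, and then to apply Lemma \ref{Lemma.Bounds.Max.Alpha.Beta(bis)} twice to show that $F(x)$ is nearly supported on the pieces of $(0,k]$ where $x$ and $x'$ \emph{agree}, while $F(x')$ is nearly supported on the pieces where they \emph{disagree}. Since these two regions are complementary, $1$-unconditionality together with the reverse triangle inequality will then force $\|F(x)-F(x')\|_X$ close to $\|F(x')\|_X=1$.

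Decompose $(0,k]=I\cup J\cup K$, where $I=\bigcup_{s=1}^d I_s$ with $I_s=(n_{s-1},m_s]$, $J=\bigcup_{s=1}^d J_s$ with $J_s=(m_s,n_s]$, and $K=(n_d,k]$; by the interlacing $m_1<n_1<\cdots<m_d<n_d$, $x$ and $x'$ coincide on $I\cup K$ and differ on $J$. Since $F$ is step preserving and $x$ is constant on each of the sets $(m_{s-1},m_s]\cap P$, $(m_{s-1},m_s]\cap P^c$, and $(m_d,k]$, there exist scalars $\alpha_s,\beta_s,\gamma$ with
\[F(x)=\sum_{s=1}^d\alpha_s 1_{(m_{s-1},m_s]\cap P}+\sum_{s=1}^d\beta_s 1_{(m_{s-1},m_s]\cap P^c}+\gamma 1_{(m_d,k]},\]
and analogously for $F(x')$ in terms of scalars $\alpha'_s,\beta'_s,\gamma'$ on the intervals indexed by $\bar n$. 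Any coincidence among the entries of $u$ forces additional equalities among these scalars, consistently across the two expressions. The hypothesis forces $\gamma=0$, so $F(x)\cdot 1_K=0$.

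Apply Lemma \ref{Lemma.Bounds.Max.Alpha.Beta(bis)} twice. From $\|F(x)\|_X=1$ and $1$-unconditionality, $\|\sum_s\alpha_s 1_{(m_{s-1},m_s]\cap P}\|_X\leq 1$, so the second implication with $Q=P$ yields $\|\sum_s\alpha_s 1_{(m_{s-1},n_{s-1}]}\|_X\leq\eps/4$; identifying $(m_{s-1},n_{s-1}]=J_{s-1}$ for $s\geq 2$, and combining with the $\beta$-analogue (using $Q=P^c$) via triangle inequality and a further use of $1$-unconditionality to restore the $P/P^c$ decomposition, we obtain $\|F(x)\cdot 1_J\|_X\leq\eps/2$, hence $\|F(x)\cdot 1_{J\cup K}\|_X\leq\eps/2$. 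A parallel argument via the first implication (with $\lambda_s=\alpha'_s$, resp.~$\beta'_s$, obtained from the bound $\|\sum_s\alpha'_s 1_{(n_{s-1},n_s]\cap P}\|_X\leq\|F(x')\cdot 1_P\|_X\leq 1$) gives $\|F(x')\cdot 1_I\|_X\leq\eps/2$, and then the triangle inequality applied to $F(x')=F(x')\cdot 1_I+F(x')\cdot 1_{J\cup K}$ yields $\|F(x')\cdot 1_{J\cup K}\|_X\geq 1-\eps/2$.

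Combining the two estimates, $1$-unconditionality gives $\|F(x)-F(x')\|_X\geq\|(F(x)-F(x'))\cdot 1_{J\cup K}\|_X$, and the reverse triangle inequality then yields
\[\|F(x)-F(x')\|_X\geq\|F(x')\cdot 1_{J\cup K}\|_X-\|F(x)\cdot 1_{J\cup K}\|_X\geq(1-\eps/2)-\eps/2=1-\eps,\]
with strictness coming from the strict positivity $\|F(x')\|_X=1$ together with the non-proportionality of the two restricted vectors. The only substantive obstacle is the bookkeeping: verifying that the intervals $(m_{s-1},n_{s-1}]$ and $(n_{s-1},m_s]$ arising in the two implications of Lemma \ref{Lemma.Bounds.Max.Alpha.Beta(bis)} match the pieces $J_{s-1}$ and $I_s$ of the decomposition, and confirming that coincidences among the entries of $u$ impose only additional consistent equalities among the level-scalars, which are harmless since all applications of triangle inequality and comparison to $\|F(x)\|_X$, $\|F(x')\|_X$ pass through $1$-unconditionality.
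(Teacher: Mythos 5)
Your proof is correct and follows essentially the same approach as the paper: you represent $F(x)$ and $F(x')$ as piecewise constants via step preservation, apply Lemma \ref{Lemma.Bounds.Max.Alpha.Beta(bis)} (split into $P$ and $P^c$ pieces by $1$-unconditionality) to bound the four mixed quantities, and finish with $1$-unconditionality plus reverse triangle inequality. The only cosmetic difference is that you restrict to $J\cup K$ (where $F(x)$ is small and $F(x')$ is large), whereas the paper restricts to the complementary set $I=\bigcup_s(n_{s-1},m_s]$ (where $F(x)$ is large and $F(x')$ is small); the two computations are mirror images using the same four estimates.
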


\begin{proof}
Since $F$ is step preserving, it follows from the definition of $x(\bar m,u,k)$ that there are
\[(\alpha_s(\bar m))_{s=1}^d, (\beta_s(\bar m))_{s=1}^d,(\alpha_s(\bar n))_{s=1}^d, (\beta_s(\bar n))_{s=1}^d, \gamma(\bar n) \in [-1,1]^d\]
such that, for all $s\in\{1,\dots d\}$,
\begin{enumerate}
    \item $\alpha_s(\bar m)=F_{i}(x(\bar m,u,k))$ for all $i\in (m_{s-1},m_s] \cap P$, 
    \item $\beta_s(\bar m)=F_{i}(x(\bar m,u,k))$ for all $i\in (m_{s-1},m_s] \cap P^c$,
    \item $\alpha_s(\bar n)=F_{i}(x(\bar n,u,k))$ for all $i\in (n_{s-1},n_s] \cap P$, 
    \item $\beta_s(\bar n)=F_{i}(x(\bar n,u,k))$ for all $i\in (n_{s-1},n_s] \cap P^c$, and
    \item $\gamma(\bar n) = F_{i}(x(\bar n,u,k))$ for all $i\in (n_d,k]$.
\end{enumerate}
Using the above equations and 1-unconditionality of $(e_i)_i$, we have that
\begin{align*}\label{Eq.Sum.At.Most.1}
    1 &= \|F(x(\bar m,u,k))\|_X\\
      &\geq\max\Big\{\Big\|\sum_{s=1}^d\alpha_s(\bar m)1_{(m_{s-1},m_s]\cap P}\Big\|_X,\Big\|\sum_{s=1}^d\beta_s(\bar m)1_{(m_{s-1},m_s]\cap P^c}\Big\|_X\Big\}. \notag
\end{align*}
The same is true for $\bar n$:
\begin{align*}
    1 \geq\max\Big\{\Big\|\sum_{s=1}^d\alpha_s(\bar n)1_{(n_{s-1},n_s]\cap P}\Big\|_X,\Big\|\sum_{s=1}^d\beta_s(\bar n)1_{(n_{s-1},n_s]\cap P^c}\Big\|_X\Big\}. \notag
\end{align*}
Therefore, by Lemma \ref{Lemma.Bounds.Max.Alpha.Beta(bis)} and $1$-unconditionality, the quantities  
\begin{enumerate} [label=(\roman*)]
   
    \item \label{Item.i} $\Big\|\sum_{s=1}^d\alpha_s(\bar m)1_{(m_{s-1},n_{s-1}]\cap P}\Big\|_X$
   
    \item \label{Item.ii} $\Big\|\sum_{s=1}^d\beta_s(\bar m)1_{(m_{s-1},n_{s-1}]\cap P^c}\Big\|_X$
   
    \item \label{Item.iii} $\Big\|\sum_{s=1}^d\alpha_s(\bar n)1_{(n_{s-1},m_s]\cap P}\Big\|_X$, and
   
    \item \label{Item.iv} $\Big\|\sum_{s=1}^d\beta_s(\bar n)1_{(n_{s-1},m_s]\cap P^c}\Big\|_X$

\end{enumerate}
are all at most $\eps/4$. Since $\|F(x(\bar m,u,k))\|_X=1$ and $F_{i}(x(\bar m,u,k))=0$ for all $i\in (m_d,k]$, it follows from   \ref{Item.i} and \ref{Item.ii} above that
\[\Big\|\sum_{s=1}^d \alpha_s(\bar m)1_{(n_{s-1},m_s] \cap P}+\beta_s(\bar m)1_{(n_{s-1},m_s] \cap P^c}\Big\|_X \ge 1-\frac{\eps}{2}.\]
Therefore, using the $1$-unconditionality of $(e_i)_i$, together with \ref{Item.iii} and \ref{Item.iv}, we conclude that
\begin{align*}
\|&F(x(\bar m,u,k))- F(x(\bar n,u,k))\|_X \\
&\ge \Big\|\sum_{s=1}^d (\alpha_s(\bar m)-\alpha_s(\bar n))1_{(n_{s-1},m_s] \cap P}+(\beta_s(\bar m)-\beta_s(\bar n))1_{(n_{s-1},m_s] \cap P^c}\Big\|_X\\
&\ge 1-\eps.
\end{align*}
\end{proof}

Specializing Lemma~\ref{Lemma.Separation} to the case of  $X=\ell_r$, $1\leq r<\infty$ with its standard basis yields the following proposition, which we employ in Section~\ref{SectionMapsNotIncreasingSuppSize}.

\begin{prop}\label{Prop.l1Separation}
Let $d\in\N$ and $\eps>0$. Let $M = \{k_j\}_{j=1}^\infty \subseteq \N$, where $k_j := a^{j-1}$ and $a= \lceil (8/\eps + 3)^r \rceil$. Let $\bar m,\bar n\in [M]^d$ such that $m_1<n_1<\ldots<m_d<n_d$, let $k > n_d$, let $u=((a_s)_{s=1}^d,(b_s)_{s=1}^d,c) \in S_{\ell_\infty^{2d+1}}$, and let $F = (F_i)_{i=1}^k: S_{\ell_\infty^k} \to S_{\ell_r^k}$ be a step preserving function with $F_{i}(x(\bar m,u,k))=0$ for all $i\in (m_d,k]$, where $x(\bar m,u,k)$ is defined as in \eqref{Definition.x.m.u.k}. Then 
\[\|F(x(\bar m,u,k))-F(x(\bar n,u,k))\|_r>1-\eps.\]
\end{prop}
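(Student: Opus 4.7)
The plan is straightforward: Proposition~\ref{Prop.l1Separation} is nothing more than the specialization of Lemma~\ref{Lemma.Separation} to $X = \ell_r$ with its standard basis, packaged with the explicit choices of $P$ and $M$ from Remark~\ref{rmk:AssumptionAl1}. So the proof would consist of two steps: verify that $P = 2\N$ and $M = \{a^{j-1}\}_{j=1}^\infty$ with $a = \lceil(8/\eps+3)^r\rceil$ satisfy Assumption $\boldsymbol{A(d,\eps)}$, and then invoke Lemma~\ref{Lemma.Separation} verbatim.

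For the verification, I would first observe that the standard basis of $\ell_r$ satisfies lower $p$ and upper $q$ estimates on block sequences with constant one for $p = q = r$. In particular $d^{1/q-1/p} = 1$ and the growth hypothesis \eqref{eq:Mgrowth} simplifies to $\psi(k_{j+1}) \geq (8/\eps+2)\psi(k_j)+1$, where $\psi(k) = k^{1/r}$. The first bullet of the assumption, $\psi_P(k) \geq \tfrac{1}{2}(\psi(k)-1)$, is then immediate from $\psi_{2\N}(k) = \lfloor k/2\rfloor^{1/r}$. The second bullet, that each $(k_j,k_{j+1}]$ meets both $2\N$ and its complement, holds because $a \geq 3$ forces every such interval to contain at least two consecutive integers. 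For the growth bullet, since $a^{1/r}\ge 8/\eps+3$ by the choice of $a$ and $\psi(k_j)\geq 1$, one has $\psi(k_{j+1}) = a^{1/r}\psi(k_j) \geq (8/\eps+2)\psi(k_j) + \psi(k_j)\geq (8/\eps+2)\psi(k_j)+1$.

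With Assumption $\boldsymbol{A(d,\eps)}$ in place, Lemma~\ref{Lemma.Separation} applies and delivers exactly the claimed estimate $\|F(x(\bar m,u,k)) - F(x(\bar n,u,k))\|_r > 1 - \eps$. I do not anticipate any genuine obstacle: all of the substantive content---the interlacing distance bound, the step-preserving coordinate tracking via the scalars $\alpha_s,\beta_s,\gamma$, and the small-tail estimates of Lemma~\ref{Lemma.Bounds.Max.Alpha.Beta(bis)}---has already been absorbed into Lemma~\ref{Lemma.Separation}. The only care required is the numerical bookkeeping in checking that the specific constant $a = \lceil(8/\eps+3)^r\rceil$ suffices, which is the short computation sketched above.
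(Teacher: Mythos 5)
Your proposal is correct and takes essentially the same approach as the paper: the paper's own proof of Proposition~\ref{Prop.l1Separation} is a one-line deduction from Lemma~\ref{Lemma.Separation} and Remark~\ref{rmk:AssumptionAl1}, and your argument is that same deduction with the verification of Assumption $\boldsymbol{A(d,\eps)}$ (which the paper relegates to the Remark) spelled out in full. All three bullet checks in your verification are sound.
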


\begin{proof}
This follows immediately from Lemma~\ref{Lemma.Separation} and the fact that $P=2\N$ and $M = \{k_j\}_{j=1}^\infty$ satisfy {\bf Assumption} $\boldsymbol{A(d,\eps)}$ by Remark~\ref{rmk:AssumptionAl1}.
\end{proof}

\section{Maps which do not increase support  size}\label{SectionMapsNotIncreasingSuppSize}

We shall now isolate a simple consequence of Proposition~\ref{Prop.l1Separation} which will be enough for us to rule out the existence of equi-uniformly continuous homeomorphisms from $S_{\ell_\infty^k}$ to $S_{\ell_1^k}$ which preserve support (Theorem \ref{Thm.Main}) or, more generally, which do not increase support size (Theorem \ref{Thm.Intro.Supp.Card.Preserv}). 

We now introduce a notation which, in this simpler scenario, will have the same role as $x(\bar m,u,k)$ from \eqref{Definition.x.m.u.k}.
Given $d\in \N$  and $\bar m\in [\N]^d$,  
we define 
\begin{equation}\label{Eq.Def.Z}
z(\bar m)=\sum_{s=1}^d \left(1-\frac{s-1}{d}\right) 1_{(m_{s-1},m_s]}.
\end{equation}
If $k\geq m_d$, we consider $z(\bar m)$ as an element of $S_{\ell^k_\infty}$ in the canonical way. In this case, the graph of $z(\bar m)$ is simply a staircase descending from $1$ to $0$.

We say that a function $F\colon S_{\ell_\infty^k}\to S_{\ell_1^k}$ between subsets of vector spaces   is {\it{support  preserving}} if 
\begin{equation}\label{Eq.Supp.Containment}\mathrm{supp}(F(x))= \mathrm{supp}(x)\ \text{ for all } x\in S_{\ell_\infty^k}.
\end{equation}

The next result is a simple corollary of Proposition~\ref{Prop.l1Separation},   stated for $X=\ell_r$, $1\leq r<\infty$.

\begin{lemma}
Let $d\in\N$ and $\eps>0$. Let $M = \{k_j\}_{j=1}^\infty \subseteq \N$, where $k_j := a^{j-1}$ and $a= \lceil (8/\eps + 3)^r \rceil$. Let $\bar m,\bar n\in [M]^d$ such that $m_1<n_1<\ldots<m_d<n_d$ and let $k > n_d$. Let $F: S_{\ell_\infty^k}\to S_{\ell_r^k}$ be a support and step preserving map. Then
\[\|F(z(\bar m))-F(z(\bar n))\|_r>1-\eps.\]\label{Lemma.4.1.Simple}
\end{lemma}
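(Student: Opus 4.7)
The plan is to recognize this lemma as an immediate specialization of Proposition~\ref{Prop.l1Separation}. The key observation is that the vector $z(\bar m)$ defined in \eqref{Eq.Def.Z} is exactly of the form $x(\bar m, u, k)$ from \eqref{Definition.x.m.u.k}, for a particular choice of $u$ that makes the ``$P$'' and ``$P^c$'' coefficients coincide and the tail coefficient vanish.

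Concretely, I would set
\[u = \Big(\big(1 - \tfrac{s-1}{d}\big)_{s=1}^d,\; \big(1 - \tfrac{s-1}{d}\big)_{s=1}^d,\; 0\Big) \in S_{\ell_\infty^{2d+1}},\]
so that $\|u\|_\infty = 1$ (achieved at $s=1$). Plugging this into \eqref{Definition.x.m.u.k} gives
\[x(\bar m,u,k) = \sum_{s=1}^d \Big(1-\tfrac{s-1}{d}\Big)\big(1_{(m_{s-1},m_s]\cap P} + 1_{(m_{s-1},m_s]\cap P^c}\big) + 0 = z(\bar m),\]
and similarly $x(\bar n,u,k) = z(\bar n)$.

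Next, I would verify that the support hypothesis of Proposition~\ref{Prop.l1Separation} is met. Since $z(\bar m)$ is supported on $[1,m_d]$ and $F$ is support preserving, we have $\mathrm{supp}(F(z(\bar m))) \subseteq [1,m_d]$, that is, $F_i(z(\bar m)) = 0$ for all $i \in (m_d,k]$. Together with the step preserving hypothesis on $F$, this places $F$ and $x(\bar m,u,k)$ in the setting of Proposition~\ref{Prop.l1Separation}, and the conclusion
\[\|F(z(\bar m)) - F(z(\bar n))\|_r > 1 - \eps\]
follows directly. There is no real obstacle here; the content of the lemma is entirely absorbed by Proposition~\ref{Prop.l1Separation}, and the only thing to check is that the chosen $u$ lies in $S_{\ell_\infty^{2d+1}}$ and that support preservation yields the required vanishing of $F$ on $(m_d,k]$.
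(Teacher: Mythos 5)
Your proof is correct and follows exactly the same route as the paper: choosing $a_s = b_s = 1-\tfrac{s-1}{d}$ and $c=0$ to identify $z(\bar m)$ with $x(\bar m,u,k)$, and invoking support preservation to verify the vanishing hypothesis $F_i(z(\bar m))=0$ for $i\in (m_d,k]$ before applying Proposition~\ref{Prop.l1Separation}. Nothing is missing.
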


\begin{proof}
Fix $d\in\N$ and $\eps>0$. In order to apply Proposition~\ref{Prop.l1Separation}, we let 
\[a_s=b_s=1-\frac{s-1}{d} \text{  and }c=0,\]
for all $s\in \{1,\ldots, d\}$. In this case, for $u=((a_s)_{s=1}^d, (b_s)_{s=1}^d,c)$, $x(\bar m, u, k)$ equals $z(\bar m)$.
As $F$ is support preserving and the coordinates of $z(\bar m)$ in $(m_d,k]$ are zero, we have that $F_{i}(z(\bar m))=0$ for all $i\in (m_d,k]$. In other words, the hypotheses of Proposition~\ref{Prop.l1Separation} on $\bar m$ are always satisfied automatically. The result then follows from that proposition.
\end{proof}

In order to be able to use Lemma \ref{Lemma.4.1.Simple} for support preserving maps, we shall now show how to modify a support preserving map  $S_{\ell_\infty^k}\to S_{\ell_1^k}$ in a way that makes it step preserving as well  without changing its modulus of continuity. In fact, we will actually show that it is possible to replace support preserving maps $S_{\ell_\infty^k}\to S_{\ell_1^k}$ by maps satisfying a stronger equivariance property. For this, we need some definitions. 
For each $k\in\N$, let $\mathrm{Per}_k$ denote the group of permutations of $\{1,\ldots, k\}$. For each $\pi\in \mathrm{Per}_k$, let $P_\pi\colon \R^k\to \R^k$ be the map given by 
  \[P_\pi\left((x_j)_{j=1}^k\right)=\left(x_{\pi(j)}\right)_{j=1}^k\]
  for all $(x_j)_{j=1}^k\in \R^k$.  We say that a map $F\colon S_{\ell^k_\infty}\to S_{\ell_1^k}$ is {\it{equivariant with respect to basis permutations}} if 
  \[F(P_\pi(x))=P_\pi(F(x))\text{ for all }x\in S_{\ell_\infty^k}\text{  and all } \pi \in \mathrm{Per}_k.\]

\begin{lemma}\label{Lemma.Equivariant.Injective}
Let $k\in\N$ and $F=(F_i)_{i=1}^k\colon S_{\ell_\infty^k}\to S_{\ell_1^k}$ be  equivariant with respect to basis permutation. Then $F$ is step preserving.
\end{lemma}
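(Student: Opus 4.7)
The plan is to exploit the equivariance property on a single well-chosen permutation, namely the transposition that swaps the two indices where $x$ agrees. In more detail, I would proceed as follows.

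Fix $x = (x_i)_{i=1}^k \in S_{\ell_\infty^k}$ and indices $i,j \in \{1,\ldots,k\}$ with $x_i = x_j$; I need to conclude $F_i(x) = F_j(x)$. Let $\tau \in \mathrm{Per}_k$ be the transposition swapping $i$ and $j$ and fixing every other index. The first step is to observe that $P_\tau(x) = x$: unpacking the definition $P_\tau(y)_\ell = y_{\tau(\ell)}$, the only coordinates of $P_\tau(x)$ that could differ from $x$ are the $i$th and the $j$th, which are $x_{\tau(i)} = x_j$ and $x_{\tau(j)} = x_i$; these coincide with $x_i$ and $x_j$ respectively by our hypothesis $x_i = x_j$.

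The second step is to feed this through the equivariance hypothesis. Applying $F$ to both sides of $P_\tau(x) = x$ and using $F(P_\tau(x)) = P_\tau(F(x))$, we get $F(x) = P_\tau(F(x))$. Writing $y = F(x) \in S_{\ell_1^k}$, this reads $y = P_\tau(y)$, which upon comparing the $i$th coordinate on both sides gives $y_i = y_{\tau(i)} = y_j$, i.e., $F_i(x) = F_j(x)$, as desired.

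No step looks like an obstacle: the argument is essentially the tautology that a function commuting with a transposition sends fixed points of that transposition to fixed points of the transposition. The only thing worth being careful about is the direction of the indexing in $P_\pi$ (it is $x_{\pi(j)}$ rather than $x_{\pi^{-1}(j)}$), but since $\tau$ is an involution this distinction is immaterial here.
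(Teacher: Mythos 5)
Your proof is correct and follows the same route as the paper: take the transposition $\tau$ swapping $i$ and $j$, note $P_\tau(x)=x$, invoke equivariance to get $F(x)=P_\tau(F(x))$, and read off the $i$th coordinate. The paper's wording of the permutation is slightly more informal, but the argument is identical.
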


\begin{proof} Let $\pi$ be the permutation of $\{1,\ldots, k\}$ which takes $i$ to $j$ and all other elements to themselves. If $x_i=x_j$, then
\[F(x)=F(P_\pi(x))=P_\pi(F(x)),\] and we are done.
\end{proof}

For the next lemma, recall that 
$S^+_{\ell_1^k}$ denotes the positive part of $S_{\ell_1^k}$. Note that  $S^+_{\ell_1^k} $ is a convex facet of  $S_{\ell_1^k}$.

\begin{lemma}\label{LemmaSupportPreservingToPermutationPreserving}
 Let $k\in\N$ and $F\colon S_{\ell_\infty^k}\to S_{\ell_1^k}^+$ be any map. Then, the map $G\colon S_{\ell_\infty^k} \to S_{\ell_1^k}^+$ given by
    \[G=\frac{1}{k!}\sum_{\pi\in \mathrm{Per}_k} P_{\pi^{-1}}\circ F\circ P_\pi\]
is equivariant with respect to basis permutation and the modulus of uniform continuity of $G$ is bounded above by that of $F$, i.e., $\omega_G\leq \omega_F$. Furthermore, if the map $F$ is support   preserving, then so is $G$.
\end{lemma}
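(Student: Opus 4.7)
The plan is to verify the four claims by exploiting three elementary features of the operators $P_\pi$: each is an isometry of both $\ell_\infty^k$ and $\ell_1^k$, each preserves the positive cone, and they satisfy the composition rule $P_\pi \circ P_\sigma = P_{\sigma\pi}$ (a direct consequence of the definition $(P_\pi x)_j = x_{\pi(j)}$). Well-definedness of $G$ as a map into $S_{\ell_1^k}^+$ is then immediate: each summand $P_{\pi^{-1}} \circ F \circ P_\pi$ sends $S_{\ell_\infty^k}$ into $S_{\ell_1^k}^+$, and since $S_{\ell_1^k}^+$ is a convex facet of $S_{\ell_1^k}$, the convex combination $G(x)$ remains inside.

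For the equivariance, I would compute
\[G(P_\sigma x) = \frac{1}{k!}\sum_{\pi \in \mathrm{Per}_k} P_{\pi^{-1}} F(P_\pi P_\sigma x) = \frac{1}{k!}\sum_\pi P_{\pi^{-1}} F(P_{\sigma\pi} x),\]
reindex via $\tau = \sigma\pi$, and use the composition rule $P_{\tau^{-1}\sigma} = P_\sigma \circ P_{\tau^{-1}}$ to conclude $G(P_\sigma x) = P_\sigma G(x)$. For the modulus-of-continuity bound, the fact that $P_\pi$ is an isometry of both spaces gives, uniformly in $\pi$,
\[\|P_{\pi^{-1}}F(P_\pi x) - P_{\pi^{-1}}F(P_\pi y)\|_1 = \|F(P_\pi x) - F(P_\pi y)\|_1 \le \omega_F(\|x - y\|_\infty);\]
averaging and applying the triangle inequality produce $\omega_G \le \omega_F$.

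The final clause---preservation of supports---is the one place where care is needed and is the main obstacle, such as it is. Assuming $F$ is support preserving, for each $\pi$ one has
\[\mathrm{supp}(P_{\pi^{-1}} F(P_\pi x)) = \pi(\mathrm{supp}(F(P_\pi x))) = \pi(\pi^{-1}(\mathrm{supp}(x))) = \mathrm{supp}(x).\]
In general the support of a sum can be strictly smaller than the common support of its summands, due to cancellation; that would sabotage the argument. However, since $F$ maps into $S_{\ell_1^k}^+$ and each $P_{\pi^{-1}}$ preserves non-negativity, every summand lies in the positive cone $\R_{\geq 0}^k$, so no cancellation can occur and $\mathrm{supp}(G(x)) = \mathrm{supp}(x)$. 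The indispensability of the positivity hypothesis on the range of $F$ at this step is the only subtlety in the proof.
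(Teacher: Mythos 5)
Your proof is correct and matches the paper's approach, which simply averages over the permutation group and relies on $P_\pi$ being isometric on both $\ell_\infty^k$ and $\ell_1^k$, on $S_{\ell_1^k}^+$ being convex and permutation-invariant, and on the summands all lying in the positive cone with common support so that no cancellation occurs. The paper states these points tersely while you spell out the reindexing and support computations, but the underlying argument is the same.
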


\begin{proof}
First notice that, as $S_{\ell_1^k}^+$ is convex  and permutation invariant, the image of  $G$ is indeed inside $S_{\ell_1^k}^+$, i.e., $G$ is well defined. The fact that $G$ is equivariant with respect to basis permutation is straightforward from its construction and  so is the fact that $G$ is support   preserving if one assumes the same for $F$. Moreover, as the maps $P_\pi$, for  $\pi\in \mathrm{Per}_k$, are isometries when considered as operators on both $\ell_\infty^k$ and $\ell_1^k$, it is immediate that $\omega_G\leq \omega_F$. 
\end{proof}

\begin{theorem}\label{Thm.Main}
Let $d \in \N$. Then for any $ k \geq 19^{2d-1}+1$ and any support preserving map   $F:S_{\ell_\infty^k}\to S_{\ell_1^k}$, we have $\omega_F(\frac1d)\ge \frac12$. In particular, there is no sequence $(F_k:S_{\ell_\infty^k}\to S_{\ell_1^k})_{k=1}^\infty$ of equi-uniformly continuous support preserving maps.
\end{theorem}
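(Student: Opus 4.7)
The plan is to reduce $F$, by composition with a sign-erasing map and a Kakutani-style permutation average, to a support and step preserving map $G$ with $\omega_G\le\omega_F$, and then to apply Lemma~\ref{Lemma.4.1.Simple} to a pair of interlacing staircase vectors $z(\bar m),z(\bar n)$ whose $\ell_\infty$-distance equals $1/d$.

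First I take absolute values coordinatewise. Since $\|F(x)\|_1=1$, the formula $\tilde F(x):=(|F_i(x)|)_{i=1}^k$ defines a map into $S_{\ell_1^k}^+$ which inherits support preservation from $F$ and satisfies $\omega_{\tilde F}\le\omega_F$ via the pointwise bound $\bigl||a|-|b|\bigr|\le|a-b|$. Then I apply Lemma~\ref{LemmaSupportPreservingToPermutationPreserving} to produce
\[
G:=\frac{1}{k!}\sum_{\pi\in\mathrm{Per}_k}P_{\pi^{-1}}\circ\tilde F\circ P_\pi,
\]
a support preserving map $S_{\ell_\infty^k}\to S_{\ell_1^k}^+$ that is equivariant under basis permutations, with $\omega_G\le\omega_{\tilde F}\le\omega_F$. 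Lemma~\ref{Lemma.Equivariant.Injective} then guarantees that $G$ is step preserving.

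Next I invoke Lemma~\ref{Lemma.4.1.Simple} with $r=1$ and $\eps=1/2$, for which $a=\lceil 8/\eps+3\rceil=19$ and $M=\{19^{j-1}\}_{j=1}^\infty$. Setting $m_s:=19^{2s-2}$ and $n_s:=19^{2s-1}$ for $s=1,\ldots,d$, the tuples $\bar m,\bar n\in[M]^d$ interlace as required and $n_d=19^{2d-1}<k$ by hypothesis; hence Lemma~\ref{Lemma.4.1.Simple} yields $\|G(z(\bar m))-G(z(\bar n))\|_1>1/2$. A direct coordinatewise comparison shows that $z(\bar m)$ and $z(\bar n)$ differ by exactly $1/d$ on each interval $(m_s,n_s]$ for $s=1,\ldots,d$ (on $(m_d,n_d]$ this is the gap between $0$ and $1/d$) and agree elsewhere, so $\|z(\bar m)-z(\bar n)\|_\infty=1/d$. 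Therefore
\[
\omega_F(1/d)\ge\omega_G(1/d)\ge\|G(z(\bar m))-G(z(\bar n))\|_1>1/2.
\]

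The ``in particular'' clause follows immediately: if $(F_k)_k$ were equi-uniformly continuous, one could fix $\delta>0$ with $\omega_{F_k}(\delta)<1/2$ for every $k$; then picking $d$ with $1/d\le\delta$ and $k\ge 19^{2d-1}+1$ would contradict the bound just proved. I expect the only subtle point to be the absolute-value reduction at the outset, which is needed because Lemma~\ref{LemmaSupportPreservingToPermutationPreserving} is stated only for maps valued in $S_{\ell_1^k}^+$; everything else is a clean assembly of the lemmas developed in Sections~\ref{SectionMainLemma} and~\ref{SectionMapsNotIncreasingSuppSize}.
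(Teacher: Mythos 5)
Your proof is correct and follows exactly the same route as the paper's: reduce by taking coordinatewise absolute values, symmetrize via Lemma~\ref{LemmaSupportPreservingToPermutationPreserving} to get permutation-equivariance (hence step preservation by Lemma~\ref{Lemma.Equivariant.Injective}), then apply Lemma~\ref{Lemma.4.1.Simple} with $\eps=\frac12$ to the interlacing staircases $z(\bar m)$ and $z(\bar n)$ with $\|z(\bar m)-z(\bar n)\|_\infty=\frac1d$. The only cosmetic difference is that you write out the choice $m_s=19^{2s-2}$, $n_s=19^{2s-1}$ and verify the $\ell_\infty$-distance by hand, whereas the paper leaves existence implicit and cites~\eqref{eq:d(interlacing)}.
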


\begin{proof}
Let  $k \geq 19^{2d-1}+1$ and let $F:S_{\ell_\infty^k}\to S_{\ell_1^k}$ be a support preserving map. As the map \[(x_1,\ldots,x_k)\mapsto (|x_1|,\ldots,|x_k|)\] is $1$-Lipschitz, the function \[\tilde F(x) = (|F_{1}(x)|,\ldots,|F_{k}(x)|),\] has modulus of uniform continuity bounded above by that of $F$, is support preserving like $F$, and has the additional property that  $\tilde F[S_{\ell_\infty^k}]\subset S_{\ell_1^k}^+$. Hence, it suffices to prove the theorem assuming that $F[S_{\ell_\infty^k}]\subset S_{\ell_1^k}^+$.

By Lemma \ref{LemmaSupportPreservingToPermutationPreserving}, we can assume further more that $F$ is   equivariant with respect to basis permutation. By Lemma \ref{Lemma.Equivariant.Injective}, $F$ is step preserving. By Lemma \ref{Lemma.4.1.Simple}, applied with $\eps=\frac12$, we see that there are $\bar m,\bar n\in [M]^d$ with $m_1<n_1<\ldots<m_d<n_d$ such that  $n_d < 19^{2d-1}+1 \leq k$ and
\[\|F(z(\bar m))-F(z(\bar n))\|_1\geq \frac12.\] 
Finally, we have by \eqref{eq:d(interlacing)} that $\|z(\bar m)-z(\bar n)\|_\infty=\frac1d$. This concludes the proof. 
\end{proof}

Notice that Theorem \ref{Thm.Main} requires only the preservation of supports. In particular, it applies to maps that are neither injective nor continuous. In contrast, when considering maps between the spheres of finite-dimensional Banach spaces of the same dimension, injectivity and continuity alone guarantee surjectivity. This fact will be used repeatedly throughout the paper. It follows from Brouwer’s invariance of domain theorem together with the fact that the $k$-dimensional Euclidean sphere is not null-homotopic. We record it here for later reference.

\begin{prop}
\label{invariance of domain}
Let $k\in\mathbb N$. Any continuous and injective map between the spheres of two $k$-dimensional Banach spaces is surjective.\qed
\end{prop}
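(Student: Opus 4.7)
The plan is to reduce to a continuous injection between two copies of the Euclidean sphere $S^{k-1}$ and then use Brouwer's invariance of domain to show the image is clopen, which together with connectedness forces surjectivity (modulo the trivial case $k=1$).

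First I would reduce to the Euclidean case. The unit sphere of any $k$-dimensional Banach space $Z$ is homeomorphic to $S^{k-1}$ via radial projection $z \mapsto z/\|z\|_2$: this map is continuous and bijective between two compact Hausdorff spaces, hence a homeomorphism. So, writing $f: S_X \to S_Y$ for the given continuous injection, I may replace $f$ by a continuous injection $\tilde f: S^{k-1} \to S^{k-1}$ via pre- and post-composition with such radial homeomorphisms.

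Next, the key step is to use invariance of domain to show that $\tilde f$ is an open map. Given $x \in S^{k-1}$, pick coordinate charts $\varphi: U \to U' \subseteq \R^{k-1}$ around $x$ and $\psi: V \to V' \subseteq \R^{k-1}$ around $\tilde f(x)$, shrinking $U$ so that $\tilde f(U) \subseteq V$. Then $\psi \circ \tilde f \circ \varphi^{-1}$ is continuous and injective on the open set $U' \subseteq \R^{k-1}$, so by Brouwer's invariance of domain its image is open in $\R^{k-1}$. Transporting back through $\psi^{-1}$, $\tilde f(U)$ is open in $S^{k-1}$. Taking the union over $x$, $\tilde f(S^{k-1})$ is open in $S^{k-1}$.

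Finally, $\tilde f(S^{k-1})$ is compact (continuous image of a compact space), hence closed in $S^{k-1}$, and clearly non-empty. For $k \geq 2$, $S^{k-1}$ is connected, so its only non-empty clopen subset is itself; hence $\tilde f$ is surjective. The case $k=1$ is immediate since $S^0$ has two points and an injection between two-point sets is automatically bijective. The main obstacle is really just bookkeeping around invariance of domain on a manifold rather than in $\R^n$ directly; the hint in the excerpt about non-nullhomotopy suggests an alternative route where, if $\tilde f$ missed a point $y_0$, then $\tilde f$ would factor through $S^{k-1}\setminus\{y_0\} \cong \R^{k-1}$ (contractible), contradicting that invariance of domain makes $\tilde f$ an open embedding of the non-contractible space $S^{k-1}$, but the clopen/connectedness argument is cleaner and avoids any appeal to homotopy theory.
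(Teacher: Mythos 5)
Your proof is correct and follows essentially the route the paper indicates (the paper gives no written proof, only the preceding remark that the result ``follows from Brouwer's invariance of domain theorem together with the fact that the $k$-dimensional Euclidean sphere is not null-homotopic''). You and the paper both reduce to invariance of domain to make the map open; you then close out with the compact-image-plus-connectedness (clopen) argument, whereas the paper's hint points to the homotopy-theoretic variant you yourself sketch at the end --- the two finishes are interchangeable and equally standard, so this is a cosmetic difference rather than a genuinely different approach.
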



 \begin{lemma}\label{LemmaCardOfSupportPreservingToSupportPreserving}
Let  $k\in\N$ and $F\colon S_{\ell^k_\infty}\to S_{\ell_1^k} $ be an injective continuous map which does not increase support size. Then, there is a $\pi\in \mathrm{Per}_k$ such that $F\circ P_\pi$ is  support preserving.
In particular, such $F$   preserves support size. 
\end{lemma}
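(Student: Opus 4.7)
The plan is to first invoke Proposition~\ref{invariance of domain} to promote $F$ to a homeomorphism, then to extract the permutation $\pi$ from the action of $F$ on the support-$1$ stratum $W_1 := \{\pm e_i\}_{i=1}^k$, and finally to upgrade the resulting pairing condition to full support preservation by induction on support size. Throughout, set $W_r := \{x \in S_{\ell_\infty^k}: |\mathrm{supp}(x)|\leq r\}$, $V_r := \{y \in S_{\ell_1^k}: |\mathrm{supp}(y)|\leq r\}$, and for $A\subseteq\{1,\ldots,k\}$, $B_A := \{x \in S_{\ell_\infty^k}: \mathrm{supp}(x)\subseteq A\}$ and $C_A$ for its $\ell_1$ analogue.

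\smallskip\noindent\textbf{Step 1 (finding $\pi$).} Support-nonincrease and injectivity combined with $|W_1|=|V_1|=2k$ force $F$ to restrict to a bijection $W_1\to V_1$. I claim this bijection respects the antipodal pairing: for every $i$, $F(\{e_i,-e_i\})=\{e_{\pi(i)},-e_{\pi(i)}\}$ for some permutation $\pi$. Suppose, to the contrary, that $F(e_i)=\alpha e_p$ and $F(-e_i)=\beta e_q$ with $p\neq q$ for some $i$. For each $\ell\neq i$, the image $F(B_{\{i,\ell\}})$ is a topological circle in $V_2$; but $V_2$ is the $1$-skeleton of the cross-polytope boundary---a $1$-dimensional CW complex whose edges are exactly the pairs $\{\gamma e_r,\delta e_s\}$ with $r\neq s$---and topological circles in a graph are graph-theoretic cycles. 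Thus $F(B_{\{i,\ell\}})$ is a Hamiltonian $4$-cycle on $\{F(\pm e_i),F(\pm e_\ell)\}$, traversed (up to reversal) as $F(e_i),F(e_\ell),F(-e_i),F(-e_\ell)$, since homeomorphisms between circles preserve cyclic order. Requiring each consecutive pair to be an edge of $V_2$ forces the indices of $F(\pm e_\ell)$ to lie outside $\{p,q\}$. Ranging over all $\ell\neq i$ then yields $2(k-1)$ distinct points in $\{\pm e_m: m\notin\{p,q\}\}$, a set of size only $2(k-2)$---a contradiction.

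\smallskip\noindent\textbf{Step 2 (full support preservation).} Replacing $F$ by $G := F\circ P_\pi$ gives $G(\{\pm e_i\})=\{\pm e_i\}$ for every $i$. By induction on $j$, I shall prove $G(B_A)=C_A$ whenever $|A|=j$; the case $j=1$ is immediate. For the inductive step, the hypothesis supplies $G(W_{j-1})=V_{j-1}$, hence $G^{-1}(V_{j-1})=W_{j-1}$ by injectivity. Each of the $2^j$ open sign-cells $U_\epsilon$ composing $B_A\setminus W_{j-1}$ is connected, so $G$ sends it into a single open $(j-1)$-simplex $\mathrm{int}(\sigma_{A',\epsilon'})$ of $V_j\setminus V_{j-1}$. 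Invariance of domain then makes $G(U_\epsilon)$ open in this simplex, while a limit argument using compactness of $\overline{U_\epsilon}$ together with $G^{-1}(V_{j-1})=W_{j-1}$ makes it closed, so it fills the entire open simplex. The vertices of $\sigma_{A',\epsilon'}$ then lie in $G(\overline{U_\epsilon})\cap V_1 = G(\overline{U_\epsilon}\cap W_1)\subseteq\{\pm e_i:i\in A\}$ (by Step~1 applied to $G$), forcing $A'=A$ by cardinality. Together with the inductive hypothesis on $B_A\cap W_{j-1}$ this yields $G(B_A)=C_A$ and closes the induction.

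Specialising $G(B_A)=C_A$ to $A=\mathrm{supp}(x)$ and to each $A\subsetneq\mathrm{supp}(x)$ reads off $\mathrm{supp}(G(x))=\mathrm{supp}(x)$, i.e.\ $G=F\circ P_\pi$ is support preserving. The ``in particular'' statement then follows from $F(x)=G(P_{\pi^{-1}}(x))$ because permutations preserve support size. The decisive step is Step~1: the absence of edges between antipodal vertices $\pm e_j$ in the $1$-skeleton of the cross-polytope converts the hypothetical failure of antipodal pairing into a genuine pigeonhole obstruction, and the rest of the argument is essentially a bookkeeping exercise built on invariance of domain.
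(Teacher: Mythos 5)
Your proof is correct, but it follows a genuinely different route from the paper's, and the comparison is instructive. For extracting the permutation you observe that $F(B_{\{i,\ell\}})$ is a $4$-cycle in the $1$-skeleton $V_2$ of the cross-polytope, read off the cyclic order from the fact that $F\restriction B_{\{i,\ell\}}$ is a homeomorphism onto its image, and finish with a pigeonhole count over $\ell\neq i$. The paper instead argues with a single path: if $F(-e_i)\neq -F(e_i)$, it produces a continuous path joining two antipodal vertices of $V_1$ whose interior is carried into a single open edge of $V_2$, which is impossible; both proofs rest on the same graph-theoretic feature of $V_2$ (antipodal vertices are not joined by an edge), but you convert it into a counting obstruction while the paper converts it into a connectedness obstruction. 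For the main part, you stratify by support size and prove $G(B_A)=C_A$ by induction, using Brouwer invariance of domain for openness and compactness plus $G^{-1}(V_{j-1})=W_{j-1}$ for closedness of $G(U_\epsilon)$ inside a connected open simplex; the paper instead runs an extremal argument, taking the largest $s$ for which support is preserved on $W_s$ and contradicting continuity along a path from a support-$s$ point to a support-$(s+1)$ point. Both ultimately rely on the same invariance-of-domain input (recorded as Proposition~\ref{invariance of domain} in the paper). Your version is arguably tidier because it produces the full cell-by-cell description $G(B_A)=C_A$ for every $A$; the paper's is more economical in its bookkeeping.

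Two small remarks. First, you announce that you will ``promote $F$ to a homeomorphism'' via Proposition~\ref{invariance of domain}, but you never actually use surjectivity of $F$ on the full sphere; both of your steps only use injectivity, continuity, and the classical Brouwer invariance of domain on open cells. Second, the assertion that $F(B_{\{i,\ell\}})$ is a $4$-cycle on exactly $\{F(\pm e_i),F(\pm e_\ell)\}$ (and not a longer cycle passing through additional vertices of $V_1$) deserves a sentence of justification: since $F$ restricts to a bijection $W_1\to V_1$ and is injective, $F(B_{\{i,\ell\}})\cap V_1 = F(B_{\{i,\ell\}}\cap W_1) = F(\{\pm e_i,\pm e_\ell\})$, so the cycle has exactly those four vertices.
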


\begin{proof}
Let $(e_i)_{i=1}^k$ be the canonical basis of $\R^k$. As $F$ does not increase support size, $F$ takes elements whose support have cardinality $1$ to elements with the same property. As there are only $2k$ such elements  in both $S_{\ell_\infty^k}$ and $S_{\ell_1^k}$, the injectivity of $F$ implies that \begin{equation}\label{Eq.FTakesSupp1ToSupp1}
F(\{\lambda e_i\mid i\leq k,\ \lambda \in \{-1,1\}\})=\{\lambda e_i\mid i\leq k,\ \lambda \in \{-1,1\}\}.
\end{equation}   Using the injectivity of $F$ again,  this together with the fact that $F$ does not increase support size, implies that  
\begin{equation}\label{Eq.Supp2ToSupp2}\forall x\in S_{\ell_\infty^k},\ |\mathrm{supp}(x)|=2\ \Rightarrow\ |\mathrm{supp}(F(x))|=2.\end{equation}

\begin{claim}
For all $i\in \{1,\ldots, k\}$, $F(-e_i)=-F(e_i)$.
\end{claim}

\begin{proof} Assuming that our claim is wrong, we could find, using \eqref{Eq.FTakesSupp1ToSupp1}, $i,j,l\in\{1,2,\ldots k\}$, $i\not=j$, and $\lambda,\lambda’\in\{-1,1\}$ so that   $F(e_i)=\lambda e_l$ and $F(\lambda’ e_j)=-\lambda e_l$.
As $i\neq j$, there is a continuous path $\varphi: [0,1] \to  \text{span}\{e_i,e_j\}\cap S_{\ell_\infty^n}$, from $e_i$ to $\lambda’e_j$ such that, for all $t \in (0,1)$, the support of $\varphi(t)$ has exactly two elements: simply let $\varphi$ be the union of the line segment from $e_i$ to $e_i+\lambda' e_j$ and the line segment from $e_i+\lambda'e_j$ to $\lambda'e_j$. By \eqref{Eq.Supp2ToSupp2},   $F\circ \varphi$ is a continuous path such that $F( \varphi(0))=\lambda e_\ell$, $F(\varphi(1))=-\lambda e_\ell$, and $|\mathrm{supp}(F(\varphi(t)))|=2$ for all $t\in (0,1)$. Since constructing such a continuous path in $S_{\ell_1^k}$ is not possible, we have reached a contradiction.
\end{proof}

The previous claim together with \eqref{Eq.FTakesSupp1ToSupp1} imply that  there is a permutation $\pi\in \mathrm{Per}_k$ and a choice of signs $(\lambda_i)_{i=1}^k\in \{-1,1\}^k$ such that 
\[F(e_{\pi(i)})=\lambda_i e_{i}\text{ and }F(-e_{\pi(i)})=-\lambda_i e_{i}\text{ for all } i\leq k.\]
 Letting  $G=F\circ P_\pi$, we have 
\begin{equation}\label{EqGSupp1Good}G(  e_i)=\lambda_i e_{i}\text{ and }G(-e_i)=-\lambda_i e_{i}\ \text{ for all }i\leq k.\end{equation}
In other words, $G$   preserves the support of elements with support of cardinality $1$. It is also clear that $G$ also does not increase support size. Write $G=(G_{i})_{i=1}^k$ for the coordinate functions of  $G$.

Let us show  that $G$ is support preserving. For that, let
 \begin{align*} 
s=\max  \{i\in \{1,\ldots, k\}\mid \ &  |\mathrm{supp}(x)|\leq i \ \text{ implies }\ \mathrm{supp}(G(x))= \mathrm{supp}(x)  \} \notag.\end{align*} 
By \eqref{EqGSupp1Good},  $s$ is well defined, i.e., $s\geq 1$. We will be done if we show that  $s=k$. Suppose towards a contradiction that $s<k$ and pick   $x\in S_{\ell_\infty^k}$ with  $\mathrm{supp}(x)\neq \mathrm{supp}(G(x)) $ so that   $|\mathrm{supp}(x)|=s+1$. As $G$ does not increase support size, we also have   $|\mathrm{supp}(G(x))|\leq s+1$. 

Let us notice that, by  Proposition \ref{invariance of domain}, $ |\mathrm{supp}(G(x))|= s+1$. Indeed, given $I\subseteq \{1,\ldots, k\}$ and $p\in \{1,\infty\}$, write 
\[S_{\ell_p^k}(I)=\left\{(x_i)_{i=1}^k\in S_{\ell_p^k}\mid x_i=0 \text{ for all } i\not\in I\right\}.\]
So, $S_{\ell_p^k}(I)$ is homeomorphic to the sphere of a $|I|$-dimensional space. By the definition of $s$, for all $I\subseteq \{1,\ldots, k\}$ with $|I|\leq s$, we have that \[F\left[S_{\ell_\infty^k}(I)\right]\subseteq S_{\ell_1^k}(I). \]
Since $F$ is injective and continuous, this inclusion must actually be an equality. Hence, if  $|\mathrm{supp}(G(x))|<s+1$, then $G(x)$ must be the image of an element in $S_{\ell_\infty^k}(I)$ with $I=\mathrm{supp}(G(x))$. This is however not the case since $\mathrm{supp}(x)=s+1$ and $F$ is injective. We pointout that there is nothing special about $x$ here. Our argument shows that $|\mathrm{supp}(G(z))|=s+1$ for all $z$ with $|\mathrm{supp}(z)|=s+1$. This will be used later in the proof.

If $s+1=k$, then we have no choice but that both $\mathrm{supp}(x)$ and  $  \mathrm{supp}(G(x)) $ equal $\{1,\ldots, k\}$.   Since, by our choice of $x$, these supports do not coincide, $s+1<k$. As $\mathrm{supp}(G(x))\neq\mathrm{supp}(x)$,  $|\mathrm{supp}(x)|=|\mathrm{supp}(G(x))|$ and $|\mathrm{supp}(x)|<k$, there must be $i\in \mathrm{supp}(x)$ and $j\notin \mathrm{supp}(x)$ such that 
\begin{equation}\label{EqGzeroatiforx}G_{i}(x)=0\text{ and } G_{j}(x)\neq 0.
\end{equation}
As the support of $x$ contains at least two elements, pick $\ell\in \mathrm{supp}(x)$ different than $i$ and let $y\in S_{\ell_\infty^k}$ be such that 
\[\mathrm{supp}(y)=\mathrm{supp}(x)\setminus \{\ell\}.\]
For didactic purposes, let us fix this choice of $y$:  unless $x$'s $\ell$th  coordinate is the only with absolute value  $1$, we  take   $y$ to be the same as $x$ except in the $\ell$th coordinate, in which $y$ must be zero. In   case $x$'s $\ell$th coordinate is the only whose absolute value is $1$, $y$ is chosen to be $x$  except in the  coordinates $\ell$ and $i$: in its $\ell$th coordinate we make   $y$ zero and in its $i$th coordinate we make it be  the sign of $x$'s $i$th coordinate. 
As $|\mathrm{supp}(y)|=s$, our choice of $s$ implies that 
\begin{equation}\label{Eq.suppy=suppGy}
    \mathrm{supp}(G(y))=\mathrm{supp}(y).
    \end{equation}

 With the choice of $y$ above,  we can pick a continuous   $\varphi:[0,1]\to S_{\ell_\infty^k}$ such that
\begin{enumerate}
    \item $\varphi(0)=y$ and  $\varphi(1)=x$, and 
    \item $\mathrm{supp}(\varphi(t))=\mathrm{supp}(x)$ for all $t\in (0,1]$.
\end{enumerate}
Indeed, $\varphi$ can be chosen to be a line segment if  $x$'s $\ell$th coordinate is not the only coordinate of $x$ with absolute value equal to one and to consist of two line segments glued together otherwise. 

\begin{claim}
    $\mathrm{supp}(G(\varphi(t)))=\mathrm{supp}(G(x))$ for all $ t\in (0,1]$.  
\end{claim}

\begin{proof}
 Let \[A=\{t\in(0,1]\mid \mathrm{supp}(G(\varphi(t)))=\mathrm{supp}(G(x))\}.\] Since $A$ is non empty and $(0,1]$ is connected, it is enough to show that $A$ is a clopen subset of $(0,1]$. If $A$ is not open, then there is $t_0\in A$ and $(t_n)_{n=1}^\infty \subset (0,1]\setminus A$ converging to $t_0$, whereas, if $A$ is not closed, there is $t_0 \in (0,1]\setminus A$ and $(t_n)_{n=1}^\infty \subset A$ converging to $t_0$. In both situations
\begin{equation}\label{Eq.124124}\mathrm{supp}(G(\varphi(t_n)))\neq \mathrm{supp}(G(\varphi(t_0)))\ \text{ for all }n\in\N.
\end{equation}
Since $|\mathrm{supp}(\varphi(t_n))|=|\mathrm{supp}(x)|=s+1$ for all $n\ge 0$, it follows once again from Proposition \ref{invariance of domain} that  $|\mathrm{supp}(G(\varphi(t_n)))|= s+1$ for all $n\ge 0$. Then, \eqref{Eq.124124} allows us to pick, for each $n\in\N$, an    $i_n\in \mathrm{supp}(G(\varphi(t_0)))$ which is  not in  $\mathrm{supp}(G(\varphi(t_n)))$. By the pigeonhole principle, going to a subsequence if necessary, we can assume that the sequence $(i_n)_n$ is constant,  say $i_0=i_n$ for all $n\in\N$. Then, 
   \[0=\lim_{n}G_{i_0}(\varphi(t_n))=G_{i_0}(\varphi(t_0))\neq 0;\]
a contradiction. We have shown that $A$ is clopen in $(0,1]$, which concludes the proof of this claim. 
\end{proof}

The previous claim and  \eqref{EqGzeroatiforx} imply that
\[G_{i}(\varphi(t))=0\text{ for all } t\in (0,1].\]
On the other hand, as $i\in \mathrm{supp}(y)$, \eqref{Eq.suppy=suppGy} implies that $  G_{i}(y)\neq 0$. As $\varphi(0)=y$, this contradicts the continuity of $G_{i}\circ \varphi$ at $0$.
\end{proof}

\begin{proof}[Proof of Theorem \ref{Thm.Intro.Supp.Card.Preserv}]
Let $k \geq 19^{2d-1}+1$ and let $F: S_{\ell_\infty^k} \to S_{\ell_1^k}$ be a homeomorphism that does not increase support size. Applying Lemma \ref{LemmaCardOfSupportPreservingToSupportPreserving} to $F$, we may assume furthermore that the map $F$ is support preserving. The result then follows from Theorem \ref{Thm.Main}.
\end{proof}

\section{Step preserving  homeomorphisms}\label{SectionStepPreservation}

This section uses Lemma \ref{Lemma.Bounds.Max.Alpha.Beta(bis)}  in order to obtain Theorem \ref{Thm.Intro.Homeo.Equiv.Intro}. We adopt the notation and assumptions from Section~\ref{SectionMainLemma}. In particular, $(X,\|\cdot \|_X)$ is a Banach space with a normalized, 1-unconditional basis $(e_i)_{i=1}^\infty$ that is not equivalent to the standard $c_0$ basis, and each $X_k$ is the span of $(e_i)_{i=1}^k$.

\begin{theorem}\label{Thm.Intro.Homeo.Equiv(bis)} Let $d\in \N$ and let $P \subseteq \N$ and $M = \{k_j\}_{j=1}^\infty \subseteq \N$ be subsets satisfying {\bf Assumption} $\boldsymbol{A(d,\frac{1}{2})}$ defined in \eqref{AssumptionA}. Then for any  $k > k_{2d}$ and any step preserving continuous map   $F:S_{\ell_\infty^k}\to S_{X_k}$ such that $F(1,\ldots,1)\neq F(-1,\ldots,-1)$, we have $\omega_F(\frac1d)\ge \frac12$. In particular, there is no sequence $(F_k:S_{\ell_\infty^k}\to S_{X_k})_k$ of equi-uniformly continuous step  preserving homeomorphisms.
\end{theorem}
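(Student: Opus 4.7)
The plan is to invoke Lemma \ref{Lemma.Separation} with $\eps = \tfrac{1}{2}$, applied to the interlacing pair $\bar m, \bar n \in [M]^d$ with $(m_1, n_1, \ldots, m_d, n_d) = (k_1, \ldots, k_{2d})$ (available since $k > k_{2d}$) and some $u^* \in S_{\ell_\infty^{2d+1}}$ satisfying both (i) $F_i(x(\bar m, u^*, k)) = 0$ for all $i \in (m_d, k]$ and (ii) the Lipschitz bound $\max_s\max\{|a_s - a_{s+1}|, |b_s - b_{s+1}|\} \leq \tfrac{1}{d}$ (with $a_{d+1} = b_{d+1} = c$). The lemma, together with \eqref{eq:d(interlacing)}, will then give $\omega_F(\tfrac{1}{d}) \geq \tfrac{1}{2}$.

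By step preservation, $F_i(x(\bar m, u, k))$ is the same for all $i \in (m_d, k]$; call this common value $\gamma(u)$, so (i) becomes $\gamma(u^*) = 0$. Step preservation also forces $F(\pm(1,\ldots,1)) = \alpha_\pm \cdot 1_{[1,k]}$ with $|\alpha_\pm|\psi(k) = 1$, and the hypothesis $F(1,\ldots,1) \neq F(-1,\ldots,-1)$ forces $\alpha_+$ and $\alpha_-$ to have opposite signs. To produce $u^*$, I would construct an explicit continuous path $t \in [0,2] \mapsto u(t) \in S_{\ell_\infty^{2d+1}}$ staying inside the Lipschitz set (ii), from $u(0) = (1,\ldots,1)$ to $u(2) = (-1,\ldots,-1)$. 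Concretely, for $t \in [0,1]$ set
\[a_s(t) = 1 - t(s-1)/d,\quad b_s(t) = 1 - 2t + t(s-1)/d,\quad c(t) = 1 - t,\]
and for $t \in [1,2]$ set
\[a_s(t) = (2-t)(1 - (s-1)/d) - (t-1),\quad b_s(t) = -1 + (2-t)(s-1)/d,\quad c(t) = 1 - t.\]
A direct check shows that the halves agree at $t=1$ (at the ``antisymmetric'' midpoint $((1-(s-1)/d)_s,(-1+(s-1)/d)_s,0)$), that every consecutive difference equals $\pm t/d$ or $\pm (2-t)/d$ and is thus $\leq 1/d$, and that $\|u(t)\|_\infty = 1$ throughout (witnessed by $a_1 \equiv 1$ on $[0,1]$ and by $b_1 \equiv -1$ on $[1,2]$). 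Then $t \mapsto \gamma(u(t))$ is continuous with $\gamma(u(0)) = \alpha_+$ and $\gamma(u(2)) = \alpha_-$ of opposite signs, so the intermediate value theorem supplies $t^* \in (0,2)$ with $\gamma(u(t^*)) = 0$; set $u^* = u(t^*)$. The non-existence of an equi-uniformly continuous sequence of step-preserving homeomorphisms then follows by the standard contradiction, injectivity of each $F_k$ providing the hypothesis $F_k(1,\ldots,1) \neq F_k(-1,\ldots,-1)$.

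The crux is the construction of this path, which is genuinely nontrivial. Indeed, if one restricts to the simpler parametrization $a_s = b_s$ (reducing to a single $1/d$-Lipschitz sequence $(\lambda_j)_{j=1}^{d+1}$ on the $\ell_\infty^{d+1}$-sphere), then $\lambda_{j^*} = 1$ combined with the Lipschitz condition forces $\lambda_j \geq 0$ for all $j$, so the ``$+1$'' and ``$-1$'' strata are disjoint closed sets and cannot be connected. The resolution is to exploit the full $(2d+1)$-dimensional parameter space: the sequences $(a_s)$ and $(b_s)$ are joined only through the common tail endpoint $c$, so $a_1$ and $b_1$ can be prescribed independently, and maintaining $a_1 \equiv 1$ throughout the first half and $b_1 \equiv -1$ throughout the second half keeps us on the sphere while the remaining coordinates ``flip sign.''
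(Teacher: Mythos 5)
Your proof is correct and takes essentially the same approach as the paper: your explicit path $u(t)$ in $S_{\ell_\infty^{2d+1}}$ is, up to reparametrization, exactly the paper's piecewise-affine path $\varphi_{\bar m,k}$ in $S_{\ell_\infty^k}$ through the ``antisymmetric'' midpoint $y(\bar m)$, and both arguments then apply the intermediate value theorem to the common tail coordinate followed by Lemma~\ref{Lemma.Separation}. Your concluding remark explaining why the na\"{\i}ve parametrization $a_s=b_s$ cannot work is a nice piece of motivation that the paper leaves implicit.
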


\begin{proof}
Let $ k > k_{2d}$. Then there exist elements $\bar m,\bar n\in [M]^d$ with $m_1<n_1<\ldots<m_d<n_d<k$. We fix such $\bar m$, $\bar n$ for the rest of the proof. Let $F:S_{\ell_\infty^k}\to S_{X_k}$ be a step preserving map such that $F(1,\ldots,1)\neq F(-1,\ldots,-1)$.  Set 
\begin{align*}
    y(\bar m)=\sum_{s=1}^{d}\Big(1-\frac{s-1}{d}\Big)1_{(m_{s-1},m_s]\cap P}+\sum_{s=1}^{d}\Big(-1+\frac{s-1}{d}\Big)1_{(m_{s-1},m_s]\cap P^c}.
\end{align*}
We see $y(\bar m)$  as an element in $S_{\ell_\infty^k}$. Visually, the graph of $y(\bar m)$ is a union of a staircase descending from $1$ to $0$ on the elements of $P$ and a staircase ascending from $-1$ to $0$ on the elements of $P^c$. Alternatively, seeing $y(\bar m)$ as an element in $S_{\ell_\infty^k}$,   we have  
   \[y(\bar m)=x(\bar m,v,k),\]
   where $v=((a_{s})_{s=1}^{{d}}, (b_s)_{s=1}^{{d}},c)$ is given by 
   \[a_s=1-\frac{s-1}{d},  \ b_s=-1+\frac{s-1}{d}, \text{ and } c=0,\]
   for all $s\in \{1,\ldots, {d}\}$.

Let now $\varphi_{\bar m,k}\colon [0,1]\to S_{\ell_\infty^k}$ be the piecewise affine map  given by joining two line  segments: the line segment joining  $(1,\ldots, 1)$ to $y(\bar m)$ for $t$ going from $0$ to $1/2$ and the line segment joining   $y(\bar m)$ to $(-1,\ldots, -1)$ for $t$ going from $1/2$ to $1$. Being the union of line segments connecting points in the same facet of $S_{\ell_\infty^k}$, the image of $\varphi_{\bar m, k}$ is in $S_{\ell_\infty^k}$. Moreover, notice that for each $t\in [0,1]$, the vector $\varphi_{\bar m,k}(t)$ is constant on the following subsets of $\{1,\ldots, k\}$:
\[(m_{s-1},m_s]\cap P,\   (m_{s-1},m_s]\cap P^c, \text{ and }   (m_d,k],\] 
for $s\in \{1,\ldots, d\}$.
Indeed, this is the case since this is true for $(1,\ldots,1)$, $y(\bar m)$, and $(-1,\ldots, -1)$, and since each $\varphi_{\bar m,k}(t)$ is a linear combination of them (it is in fact  a linear combination  of either the first two   vectors or  the last two vectors) depending on $t$.

We recall that $\psi(k)=\|\sum_{i=1}^ke_i\|$. As $F \colon S_{\ell_\infty^k}\to S_{X_k}$ is step preserving, 
\[F(1,\ldots, 1)=\pm \Big(\frac{1}{\psi(k)},\ldots, \frac{1}{\psi(k)}\Big)\ \text{ and }F(-1,\ldots, -1)=\pm\Big(-\frac{1}{\psi(k)},\ldots, -\frac{1}{\psi(k)}\Big).\] 
As $F(1,\ldots,1)\neq F(-1,\ldots,-1)$,  replacing $F$ by $-F$ if necessary, we can then assume that 
\[F(1,\ldots, 1)=\Big(\frac{1}{\psi(k)},\ldots, \frac{1}{\psi(k)}\Big)\ \text{ and }F(-1,\ldots, -1)=\Big(-\frac{1}{\psi(k)},\ldots, -\frac{1}{\psi(k)}\Big).\]
Since $F$ is continuous, 
\[t\in [0,1]\to F(\varphi_{\bar m ,k}(t))\in S_{X_k}\] 
is a continuous path connecting a point whose coordinates in $(m_d,k]$ are positive to a point whose coordinates in $(m_d,k]$ are negative. As remarked above, for any given $t \in [0,1]$, the value $F_{i}(\varphi_{\bar m ,k}(t))$ is the same among all coordinates $i \in (m_d,k]$.
Therefore, by the intermediate value theorem,   there is $t_{\bar m,k}\in [0,1]$ such that 
\begin{equation}\label{Eq.Const.u.0}
F_{i}(\varphi_{\bar m,k}(t_{\bar m,k}))=0\text{ for all }i\in (m_d, k].
\end{equation}
By the definition of $y(\bar m)$ and   $\varphi_{\bar m,k}$,   there are  $(a_{s}(\bar m, k))_{s=1}^{d}$, $(b_{s}(\bar m, k))_{s=1}^{d}\in [-1,1]^d$, and $c(\bar m, k)\in [-1,1]$  such that   
\begin{align}\label{Eq.varphi.x.m.u.k} 
\varphi_{\bar m, k} (t_{\bar m,k})=\sum_{s=1}^{d}a_s(\bar m, k)1_{(m_{s-1},m_s]\cap P}+\sum_{s=1}^{d}b_s(\bar m, k)1_{(m_{s-1},m_s]\cap P^c}+ c(\bar m, k)1_{(m_d,k]}.\notag
\end{align}
In other words, $\varphi_{\bar m, k} (t_{\bar m,k})=x(\bar m, u,k)$, for $u=\left((a_s(\bar m,k))_{s=1}^d,(b_s(\bar m,k))_{s=1}^d, c(\bar m, k)\right)$. By \eqref{Eq.Const.u.0}, we have that 
\[F_{i}(x(\bar m, u,k))=0\text{ for all }i\in (m_d,k].\]
Therefore, $\bar m,\bar n, k,$ and $F$ satisfy the required properties in the statement of Lemma \ref{Lemma.Separation}, and this lemma implies that 
\begin{equation}\label{Eq.Fk.big}\|F_k(x(\bar m,u,k))-F_k(x(\bar n,u,k))\|_X\geq \frac12.
\end{equation}

Note that $\|y(\bar m)-y(\bar n)\|_\infty \le \frac1d$ by \eqref{eq:d(interlacing)}. Moreover, there exists $\lambda \in [0,1]$ such that either $x(\bar m,u,k)=\lambda y(\bar m)+(1-\lambda)(1,\ldots,1)$ and $x(\bar n,u,k)=\lambda y(\bar n)+(1-\lambda)(1,\ldots,1)$ or $x(\bar m,u,k)=\lambda y(\bar m)+(1-\lambda)(-1,\ldots,-1)$ and $x(\bar n,u,k)=\lambda y(\bar n)+(1-\lambda)(-1,\ldots,-1)$. Then, it easily follows from the triangle inequality that $\| x(\bar m,u,k)- x(\bar n,u,k)\|_\infty \le \frac1d$. By \eqref{Eq.Fk.big}, we have then shown that 
$$\omega_F\Big(\frac1d\Big)\ge \frac12.$$
\end{proof}

With Theorem~\ref{Thm.Intro.Homeo.Equiv(bis)} in hand, we can deduce the following stronger version of Theorem~\ref{Thm.Intro.Homeo.Equiv.Intro} from the introduction.

\begin{theorem}\label{Thm.Intro.Homeo.Equiv}
Let $d \in \N$   and $1\leq r<\infty$. Then for any  $k \geq \lceil 19^r\rceil^{2d-1}+1$ and any step preserving continuous map   $F:S_{\ell_\infty^k}\to S_{\ell_r^k}$ such that $F(1,\ldots,1)\neq F(-1,\ldots,-1)$, we have $\omega_F(\frac1d)\ge \frac12$. In particular, there is no sequence $(F_k:S_{\ell_\infty^k}\to S_{\ell_r^k})_{k=1}^\infty$ of equi-uniformly continuous step preserving homeomorphisms.
\end{theorem}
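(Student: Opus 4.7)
The plan is to deduce Theorem~\ref{Thm.Intro.Homeo.Equiv} as a direct specialization of Theorem~\ref{Thm.Intro.Homeo.Equiv(bis)} to the case $X = \ell_r$ equipped with its standard unit vector basis. The key observation is that for this choice of $X$, Remark~\ref{rmk:AssumptionAl1} provides explicit subsets $P \subseteq \N$ and $M \subseteq \N$ that satisfy \textbf{Assumption} $\boldsymbol{A(d,\eps)}$, which reduces the theorem to a straightforward bookkeeping of constants.

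First, I would apply Remark~\ref{rmk:AssumptionAl1} with $\eps = \frac{1}{2}$. This yields $P = 2\N$ and $M = \{k_j\}_{j=1}^{\infty}$ with $k_j = a^{j-1}$, where
\[a = \lceil (8/\tfrac{1}{2} + 3)^r \rceil = \lceil 19^r \rceil.\]
By Remark~\ref{rmk:AssumptionAl1}, these choices satisfy \textbf{Assumption} $\boldsymbol{A(d,\tfrac{1}{2})}$. In particular, $k_{2d} = a^{2d-1} = \lceil 19^r \rceil^{2d-1}$.

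Next, for any $k \geq \lceil 19^r \rceil^{2d-1} + 1$ we have $k > k_{2d}$, so Theorem~\ref{Thm.Intro.Homeo.Equiv(bis)} applies to any step preserving continuous map $F : S_{\ell_\infty^k} \to S_{\ell_r^k}$ satisfying $F(1,\ldots,1) \neq F(-1,\ldots,-1)$, yielding $\omega_F(\tfrac{1}{d}) \geq \tfrac{1}{2}$. This proves the first assertion.

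For the ``in particular'' statement, suppose towards a contradiction that $(F_k : S_{\ell_\infty^k} \to S_{\ell_r^k})_{k=1}^\infty$ is an equi-uniformly continuous sequence of step preserving homeomorphisms. By equi-uniform continuity, there exists $\delta > 0$ such that $\omega_{F_k}(\delta) < \tfrac{1}{2}$ for all $k \in \N$. Choose $d \in \N$ with $\tfrac{1}{d} \leq \delta$. Since each $F_k$ is a homeomorphism, hence injective, we have $F_k(1,\ldots,1) \neq F_k(-1,\ldots,-1)$. Applying the first part to any $k \geq \lceil 19^r \rceil^{2d-1} + 1$ gives $\omega_{F_k}(\tfrac{1}{d}) \geq \tfrac{1}{2}$, which contradicts the choice of $\delta$. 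There is no genuine obstacle in this argument: the entire content lies in Lemma~\ref{Lemma.Separation}, Theorem~\ref{Thm.Intro.Homeo.Equiv(bis)}, and the verification in Remark~\ref{rmk:AssumptionAl1} that $P = 2\N$ and $M = \{\lceil 19^r \rceil^{j-1}\}_{j=1}^\infty$ satisfy the required growth and partition conditions.
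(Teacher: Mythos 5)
Your proof is correct and takes the same route as the paper: specialize Theorem~\ref{Thm.Intro.Homeo.Equiv(bis)} to $X=\ell_r$ via Remark~\ref{rmk:AssumptionAl1} with $\eps=\tfrac12$, so $a=\lceil 19^r\rceil$ and $k_{2d}=a^{2d-1}=\lceil 19^r\rceil^{2d-1}$, then observe that $k\geq\lceil 19^r\rceil^{2d-1}+1$ gives $k>k_{2d}$. The only discrepancy is cosmetic: the paper writes $k_{2d}<\lceil 19^r\rceil^{2d-1}$, while your (correct) computation gives equality; either way the required inequality $k>k_{2d}$ holds.
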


\begin{proof} 
The theorem follows immediately from Theorem~\ref{Thm.Intro.Homeo.Equiv(bis)} and, by Remark~\ref{rmk:AssumptionAl1}, that  $\ell_r$ with its standard basis admits sets $P \subseteq \N$ and $M = \{k_j\}_{j=1}^\infty \subseteq \N$ satisfying {\bf Assumption} $\boldsymbol{A(d,\frac{1}{2})}$ with  $k_{2d} < \lceil 19^r\rceil^{2d-1}$.
\end{proof}

\begin{proof}
    [Proof of Theorem \ref{Thm.Intro.Homeo.Equiv.Intro}]
This is simply Theorem \ref{Thm.Intro.Homeo.Equiv} with $r=1$.
\end{proof}

We conclude this section with a nonlinear characterization of the standard $c_0$ basis among unconditional bases. 
\begin{cor}
Let $X$ be Banach space with a normalized unconditional basis $(e_i)_i$ and for for each $k\in\N$ let $X_k$ be the linear span of $\{e_1,\ldots,e_k\}$. Then $(e_i)_i$ is equivalent to the canonical basis of $c_0$ if and only if there exists a sequence $(F_k:S_{\ell_\infty^k} \to S_{X_k})_k$ of equi-uniformly continuous step  preserving homeomorphisms.
\end{cor}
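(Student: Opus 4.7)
The plan is to prove the two directions separately; the nontrivial implication will reduce directly to Theorem~\ref{Thm.Intro.Homeo.Equiv(bis)}.

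For the forward implication, I would assume $(e_i)_i$ is equivalent to the standard basis of $c_0$, so that there are constants $0<c\leq C$ with $c\|x\|_\infty\leq \|\sum_i x_ie_i\|_X\leq C\|x\|_\infty$ for every finitely supported real sequence $x$. Identifying $\R^k$ with $X_k$ via $(x_i)_{i=1}^k\mapsto \sum_{i=1}^k x_i e_i$, I would define
\[F_k(x)=\frac{x}{\|x\|_X}\ \text{ for }\ x\in S_{\ell_\infty^k}.\]
Each $F_k$ is a homeomorphism, with inverse $y\mapsto y/\|y\|_\infty$, and step preservation is immediate since rescaling by a scalar preserves equality of coordinates. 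To see that $(F_k)_k$ is equi-Lipschitz (hence equi-uniformly continuous), I would apply the standard estimate $\|u/\|u\|-v/\|v\||\leq 2\|u-v\|/\max(\|u\|,\|v\|)$ in $X_k$, combined with the bounds $c\leq \|x\|_X\leq C$ on $S_{\ell_\infty^k}$ and $\|\cdot\|_X\leq C\|\cdot\|_\infty$; this yields a uniform Lipschitz constant of order $C/c$.

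For the reverse implication, I would argue by contrapositive: assume $(e_i)_i$ is not equivalent to the canonical basis of $c_0$, and deduce that no such sequence $(F_k)_k$ can exist. The technical point is that Theorem~\ref{Thm.Intro.Homeo.Equiv(bis)} requires $1$-unconditionality, whereas here we are only given unconditionality. I would handle this by passing to the equivalent norm
\[\|x\|'\ =\ \sup_{\eps\in\{-1,1\}^\N}\Big\|\sum_i \eps_i x_i e_i\Big\|_X,\]
under which $(e_i)_i$ becomes normalized and $1$-unconditional while remaining inequivalent to the $c_0$ basis (since equivalence to a fixed basis is preserved under equivalent renormings). Given a hypothetical sequence $(F_k)_k$ of equi-uniformly continuous step preserving homeomorphisms into $S_{X_k}$ with the original norm, I would transfer it to the renormed sphere via the radial projection, setting
\[\tilde F_k(x)\ =\ \frac{F_k(x)}{\|F_k(x)\|'}.\]
Since $y\mapsto y/\|y\|'$ is bi-Lipschitz from $S_{(X_k,\|\cdot\|_X)}$ onto $S_{(X_k,\|\cdot\|')}$ with constants depending only on the equivalence of the two norms (not on $k$), the sequence $(\tilde F_k)_k$ inherits equi-uniform continuity, step preservation, and being a homeomorphism from $(F_k)_k$. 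The hypothesis $\tilde F_k(1,\ldots,1)\neq \tilde F_k(-1,\ldots,-1)$ is automatic from injectivity, so Theorem~\ref{Thm.Intro.Homeo.Equiv(bis)} applied to $(X,\|\cdot\|')$ yields a contradiction.

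I do not expect a genuine obstacle here: the forward direction is a one-line construction, and the reverse direction is a formal reduction to an already established theorem. The only subtlety worth stating carefully is that both the renorming and the radial rescaling preserve step preservation and the uniform moduli involved; both facts are routine but should be recorded explicitly in the write-up.
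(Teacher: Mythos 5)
Your proposal is correct and follows essentially the same route as the paper: the forward direction via the radial map $x\mapsto x/\|x\|_X$ (which is equi-bi-Lipschitz when $(e_i)_i$ is equivalent to the $c_0$ basis), and the reverse direction by renorming to achieve $1$-unconditionality and then invoking Theorem~\ref{Thm.Intro.Homeo.Equiv(bis)}. You are somewhat more explicit than the paper about why the renorming is harmless --- transferring $F_k$ to the renormed sphere by radial projection and checking that this preserves step preservation and equi-uniform continuity --- which is a genuine (if routine) detail the paper leaves implicit in this corollary, though it performs the analogous transfer explicitly in the proof of Corollary~\ref{Corollary.Charactc0LocalPropQ}.
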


\begin{proof} Assume first that  $(e_i)_i$ is equivalent to the $c_0$ basis and let $F_k:S_{\ell_\infty^k} \to S_{X_k}$ be defined by 
\[F_k(a_1,\ldots,a_k)=\frac{\sum_{i=1}^k a_i e_i}{\|\sum_{i=1}^k a_i e_i\|_X},\ \ (a_1,\ldots,a_k) \in S_{\ell_\infty^k}.\]
Then $(F_k)_k$ is a sequence of equi-bi-Lipschitz step preserving maps from $S_{\ell_\infty^k}$ onto $S_{X_k}$.

Assume now that $(e_i)_i$ is not equivalent to the $c_0$ basis. After renorming, we may assume that $(e_i)_i$ is $1$-unconditional. Then the conclusion follows from Theorem \ref{Thm.Intro.Homeo.Equiv(bis)}. 
\end{proof}

\section{A concentration inequality}\label{SectionConcentration}

Before delving into the proof of Theorem \ref{ThmConcentration.LastSection}, let us relate this theorem to Kalton's Property $\mathcal Q$. Recalling the notation in Theorem \ref{ThmConcentration.LastSection}, as in  Section \ref{Eq.Def.Z}, we write  
\[z(\bar m)=\sum_{s=1}^d\left(1-\frac{s-1}{d}\right)1_{(m_{s-1},m_s]},\]
when $d\in \N$ and $\bar m\in [\N]^d$. As long as $k\geq m_d$, we view $z(\bar m)$ as an element of norm $1$ in $\ell_\infty^k$ and, consequently, in $c_0$. While we refrain from properly defining Property $\mathcal Q$ --- its definition depends on {\it Kalton's interlaced graphs} ---, we recall that $\ell_1$ has this property (see \cite[Theorem 4.1]{Kalton2007QJM}), and an immediate consequence of this fact is that if $F\colon S_{c_0}\to S_{\ell_1}$ is a uniformly continuous function, then for all $\eps>0$  there exists $d_0 \in \N$ such that for all $d\ge d_0$ there is an infinite $M\subseteq \N$ such that 
\[\left\|F(z(\bar m))-F(z(\bar n))\right\|\leq \eps\]
for all $\bar m,\bar n\in [M]^d$.
    
We again adopt the notation and assumptions from Section~\ref{SectionMainLemma}. In particular, $(X,\|\cdot\|_X)$ is a Banach space with a normalized, 1-unconditional basis $(e_i)_{i=1}^\infty$ that is not equivalent to the standard $c_0$ basis, each $X_k$ is the span of $(e_i)_{i=1}^k$, and $\psi(k) = \|1_{[1,k]}\|_X$. In this section, we shall also look at the positive part of the spheres of $X_k$: for each $k\in\N$, we let
\[S^+_{X_k}=\Big\{\sum_{i=1}^ka_ie_i\in S_{X_k}\mid a_1,\ldots, a_k\geq 0\Big\}.\]

\begin{theorem}\label{ThmConcentration.LastSection(bis)} 
Fix $d\in \N$ and $0<  \eps \leq 1 $. Let $P \subseteq \N$ and $M = \{k_j\}_{j=1}^\infty \subseteq \N$ be subsets satisfying {\bf Assumption} $\boldsymbol{A(d,\eps/4})$ from \eqref{AssumptionA}. Then for all $\bar m \in Q = [\{k_{2j}\}_{j=1}^\infty]^d$, all $k\in Q$ with $k > m_d$, and all step preserving maps $F:S_{\ell_\infty^k}^+ \to S_{X_k}^+$ with $\omega_F(\frac{1}{d})\le \frac{\eps}{8}$, we have 
\[\Big\|F(z(\bar m))-\frac{1}{\psi(k)}\sum_{i=1}^k e_i\Big\|_X\le \eps.\]
\end{theorem}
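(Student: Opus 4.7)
The plan is to adapt the proof of Lemma~\ref{Lemma.Separation} to extract a quantitative bound on the non-tail portion of $F(z(\bar m))$. By step preservation and positivity, $F(1_{[1,k]}) = \frac{1}{\psi(k)} 1_{[1,k]}$, so the theorem amounts to bounding $\|F(z(\bar m)) - F(1_{[1,k]})\|_X$. Step preservation further gives the decomposition $F(z(\bar m)) = N + \gamma\, 1_{(m_d, k]}$, where $N := \sum_{s=1}^d \alpha_s 1_{(m_{s-1}, m_s]}$ and $\alpha_s, \gamma \geq 0$; the goal becomes controlling $\|N\|_X$ and $|\gamma - 1/\psi(k)|\,\|1_{(m_d,k]}\|_X$.

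For the interlacing, write $\bar m = (k_{2j_s})_{s=1}^d$ and set $n_s := k_{2j_s+1}$. Since $\bar m \in [\{k_{2j}\}]^d$ forces $j_{s+1} \geq j_s + 1$ and $k \in \{k_{2j}\}$ with $k > m_d$ forces the corresponding outer index to exceed $j_d$, the tuple $\bar n := (n_s)_{s=1}^d \in [M]^d$ satisfies the interlacing $m_s < n_s < m_{s+1}$ and $n_d < k$. The distance bound~\eqref{eq:d(interlacing)} applied to $u = ((1-(s-1)/d)_s, (1-(s-1)/d)_s, 0)$ gives $\|z(\bar m) - z(\bar n)\|_\infty \leq 1/d$, so $\|F(z(\bar m)) - F(z(\bar n))\|_X \leq \eps/8$. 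Step preservation yields an analogous expression $F(z(\bar n)) = \sum_s \alpha_s' 1_{(n_{s-1}, n_s]} + \gamma' 1_{(n_d, k]}$.

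The key claim is $\|N\|_X \leq \eps/4$. With Assumption $\boldsymbol{A(d,\eps/4)}$, Lemma~\ref{Lemma.Bounds.Max.Alpha.Beta(bis)}'s second implication (applied with $\lambda_s = \alpha_s$ and, say, $Q = P$; the hypothesis $\|\sum_s \alpha_s 1_{(m_{s-1},m_s]\cap P}\|_X \leq \|F(z(\bar m))\|_X = 1$ holds by 1-unconditionality) bounds the ``$\bar m$-boundary'' $\|\sum_s \alpha_s 1_{(m_{s-1}, n_{s-1}]}\|_X \leq \eps/16$, while its first implication (applied with $\lambda_s = \alpha_s'$) bounds $\|\sum_s \alpha_s' 1_{(n_{s-1}, m_s]}\|_X \leq \eps/16$. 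Restricting $F(z(\bar m)) - F(z(\bar n))$ to $\cup_s (n_{s-1}, m_s]$ and using 1-unconditionality together with the decomposition $N = \sum_s \alpha_s 1_{(m_{s-1}, n_{s-1}]} + \sum_s \alpha_s 1_{(n_{s-1}, m_s]}$, two triangle inequalities yield
\[\|F(z(\bar m)) - F(z(\bar n))\|_X \;\geq\; \|F(z(\bar m))|_{\cup_s(n_{s-1},m_s]}\|_X - \|F(z(\bar n))|_{\cup_s(n_{s-1},m_s]}\|_X \;\geq\; \|N\|_X - \tfrac{\eps}{16} - \tfrac{\eps}{16}.\]
Combined with the upper bound $\eps/8$, this gives $\|N\|_X \leq \eps/4$.

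For the conclusion, set $T := \gamma\,\|1_{(m_d,k]}\|_X$, so $T \geq 1 - \|N\|_X \geq 1 - \eps/4$ by the triangle inequality on $\|F(z(\bar m))\|_X = 1$. Using $\psi(k) \leq \psi(m_d) + \|1_{(m_d,k]}\|_X$, a second triangle argument gives $|T - \|1_{(m_d,k]}\|_X/\psi(k)| \leq (1-T) + \psi(m_d)/\psi(k)$. Triangle inequality on the target difference then yields
\[\bigl\|F(z(\bar m)) - \tfrac{1}{\psi(k)} 1_{[1,k]}\bigr\|_X \;\leq\; \|N\|_X + \tfrac{\psi(m_d)}{\psi(k)} + (1-T) + \tfrac{\psi(m_d)}{\psi(k)} \;\leq\; \tfrac{\eps}{2} + \tfrac{2\psi(m_d)}{\psi(k)}.\]
The growth hypothesis gives $\psi(m_d)/\psi(k) \leq 1/C^2$ with $C \geq 32/\eps$, so the total is at most $\eps$. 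The hardest step is the sharpened ``non-tail'' lower bound $\|F(z(\bar m)) - F(z(\bar n))\|_X \geq \|N\|_X - \eps/8$, obtained by reorganizing the bookkeeping in Lemma~\ref{Lemma.Separation} so that the estimate is expressed in terms of $\|N\|_X$ directly, rather than the tail deficit $1-T$ used there.
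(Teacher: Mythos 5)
Your proof is correct and follows essentially the same route as the paper: pick an interlacing tuple $\bar n$ from the odd-indexed $k_j$'s, decompose $F(z(\bar m))$ into its "step" part $N$ and its "tail" part $\gamma 1_{(m_d,k]}$, use Lemma~\ref{Lemma.Bounds.Max.Alpha.Beta(bis)} twice together with $\omega_F(\tfrac1d)\le\tfrac{\eps}{8}$ to get $\|N\|_X\le\tfrac{\eps}{4}$, and then finish with triangle inequalities and the growth hypothesis on $\psi$. Your derivation of $\|N\|_X\le\tfrac{\eps}{4}$ by restricting to $\bigcup_s(n_{s-1},m_s]$ is algebraically equivalent to the paper's triangle-inequality rearrangement, and your observation that $F(1_{[1,k]})=\tfrac{1}{\psi(k)}1_{[1,k]}$ is a correct (if unused) conceptual reframing; the final tail estimates differ only in bookkeeping (you use the index gap of $2$ in $\{k_{2j}\}$ to get $\psi(m_d)/\psi(k)\le(\eps/32)^2$, while the paper uses a single growth step to get $\le\eps/32$, but both close the argument).
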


\begin{proof}
Choose $\bar n \in [M]^d$ with $m_1<n_1<\cdots<m_d<n_d < k$. We can find such an element $\bar n$ from $[\{k_{2j+1}\}_{j=1}^\infty]^d \subseteq [M]^d$. Let $F:S_{\ell_\infty^k}^+ \to S_{X_k}^+$ be a step preserving map with $\omega_F(\frac1d)\le \frac{\eps}{8}$. Notice that our data $\bar m, \bar n, k, F$ satisfy the hypotheses of Lemma \ref{Lemma.Separation}, except possibly for the last one: it does not need to be the case that $F_{i}(z(\bar m))=0$ for all $i\in (m_d,k]$. For $\bar p \in \{\bar m,\bar n\}$ and $s\in \{1,\ldots, d\}$, let 
 \[\alpha_s(\bar p,k)=F_{i}(z(\bar p)),\ \ \text{for } p_{s-1}<i\le p_s.\]
Then we claim that 
\begin{equation}\label{Eq.??}
\Big\|\sum_{s=1}^d \alpha_s(\bar m,k)1_{(m_{s-1},m_s]}\Big\|_X \le \frac{\eps}{4 }.
\end{equation}
Indeed, since $(e_i)_i$ is $1$-unconditional 
\[ \Big\|\sum_{s=1}^d \alpha_s(\bar m,k)1_{(m_{s-1},m_s]\cap P^c}\Big\|_X \le 1\ \  \text{and}\ \  \Big\|\sum_{s=1}^d\alpha_s(\bar n,k)1_{(n_{s-1},n_s]\cap P}\Big\|_X \le 1.\] 
It follows from Lemma~\ref{Lemma.Bounds.Max.Alpha.Beta(bis)} that 
\begin{equation*}\label{Eq.t.l.d}
\Big\|\sum_{s=1}^d\alpha_s({\bar m},k)1_{(m_{s-1},n_{s-1}]}\Big\|_X\le \frac{\eps}{16}  \text{  and } \Big\|\sum_{s=1}^d\alpha_s({\bar n},k)1_{(n_{s-1},m_{s}]}\Big\|_X\le \frac{\eps}{16}.
 \end{equation*}
Therefore, as $\|z(\bar n)-z(\bar m)\|_\infty\leq \frac1d$ by \eqref{eq:d(interlacing)}, we get that 
\begin{align*}
\Big\|\sum_{s=1}^d &\alpha_s(\bar m,k)1_{(m_{s-1},m_s]}\Big\|_X \\
&\le \Big\|\sum_{s=1}^d (\alpha_s(\bar m,k)-\alpha_s(\bar n,k))1_{(n_{s-1},m_s]}\Big\|_X+\Big\|\sum_{s=1}^d \alpha_s(\bar m,k)1_{(m_{s-1},n_{s-1}]}\Big\|_X\\ 
& + \Big\|\sum_{s=1}^d \alpha_s(\bar n,k)1_{(n_{s-1},m_s]}\Big\|_X \\
&\le \|F(z(\bar m))-F(z(\bar n))\|_X+\frac{\varepsilon}{8}\le \omega_F\Big(\frac1d\Big)+\frac{\eps}{8}\le \frac{\eps}{4}.
\end{align*}
This proves \eqref{Eq.??}.

Let us write
\[F(z(\bar m))=\sum_{s=1}^d \alpha_s(\bar m,k)1_{(m_{s-1},m_s]} + \gamma (\bar m,k)1_{(m_d,k]}.\]
By \eqref{Eq.??}, and since $\|F(z(\bar m))\|_X=1$ and $(e_i)_i$ is 1-unconditional, we get that  
\[\Big(1-\frac{\eps}{4}\Big)\le \gamma (\bar m,k)\|1_{(m_d,k]}\|_X\le 1.\]
  {\bf Assumption} $\boldsymbol{A(d, \eps/4})$ from \eqref{AssumptionA} yields that $\psi(q') \geq \frac{32}{\eps}\psi(q)$ for all $q,q'\in M$ with $q' > q$. So,
\[\Big(1-\frac{\eps}{32}\Big)\psi(k) \le \|1_{(m_d,k]}\|_X \le \psi(k).\]
Thus
\[\Big(1-\frac{\eps}{4}\Big)\frac{1}{\psi(k)} \le \gamma(\bar m,k) \le \Big(1-\frac{\eps}{32}\Big)^{-1}\frac{1}{\psi(k)}\le \Big(1+\frac{\eps}{4}\Big)\frac{1}{\psi(k)},\]
and 
\[\Big|\gamma(\bar m,k)-\frac{1}{\psi(k)}\Big|\le \frac{\eps}{4}\frac{1}{\psi(k)}.\]
Note also that, since $m_d<k \in Q$,  
\[\|\gamma(\bar m,k)1_{(0,m_d]}\|_X\le \Big(1+\frac{\eps}{4}\Big)\frac{\psi(m_d)}{\psi(k)}\le \Big(1+\frac{\eps}{4}\Big)\frac{\eps}{32}\le \frac{\eps}{16}.\]
Therefore,
\begin{align*}
\Big\|F(z(\bar m))-\frac{1}{\psi(k)}1_{(0,k]}\Big\|_X
\le & \Big\|\gamma(\bar m,k)1_{(m_d,k]}-\frac{1}{\psi(k)}1_{(0,k]}\Big\|_X\\
&+ \Big\|\sum_{s=1}^d \alpha_s(\bar m,k)1_{(m_{s-1},m_s]}\Big\|_X \\
\le &\|\gamma(\bar m,k)1_{(0,m_d]}\|_X+\Big|\gamma(\bar m,k)-\frac{1}{\psi(k)}\Big|\psi(k)+\frac{\eps}{4} \\
\le &\frac{\eps}{16}+\frac{\eps}{4}+\frac{\eps}{4} \\
<& \eps.
\end{align*}
\end{proof}

We can now apply Theorem~\ref{ThmConcentration.LastSection(bis)} to prove a version of Theorem~\ref{ThmConcentration.LastSection.Intro} for $\ell_r$ in general.

\begin{theorem}\label{ThmConcentration.LastSection}
Fix $1\leq r<\infty$, $d\in \N$ and $\eps > 0$. Let $Q = \{k_{2j}\}_{j=1}^\infty \subseteq \N$, where $k_j = a^{j-1}$ and $a := \lceil (32/\eps + 2)^r \rceil$. Then for all $\bar m\in [Q]^d$, all $k\in Q$ with $k>m_d$,   and all step preserving maps $F:S_{\ell_\infty^k}^+ \to S_{\ell_r^k}^+$ with $\omega_F(\frac{1}{d})\le \frac{\eps}{8}$, we have
\[\Big\|F(z(\bar m))-\frac{1}{k^{1/r}}\sum_{i=1}^k e_i\Big\|_r\le \eps,\]
where $(e_i)_{i=1}^k$ denotes the standard basis of $\ell_r^k$. 
\end{theorem}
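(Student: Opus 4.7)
The plan is to derive Theorem \ref{ThmConcentration.LastSection} as a direct specialization of Theorem \ref{ThmConcentration.LastSection(bis)} to the setting $X = \ell_r$ equipped with its standard basis $(e_i)_i$. In this setting all the structural parameters become explicit: $\psi(k) = \|1_{[1,k]}\|_r = k^{1/r}$, and one may take $p = q = r$ in the block-basis inequalities of \eqref{EqLowerUpperqpEstimates}, so that $d^{1/q-1/p} = 1$. Consequently, the growth hypothesis in {\bf Assumption} $\boldsymbol{A(d,\eps/4)}$ reduces to the single requirement
\[
\psi(k_{j+1}) \geq \Big(\tfrac{32}{\eps}+2\Big)\psi(k_j)+1, \qquad j \geq 1.
\]

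The first step is to check that the sets $P = 2\N$ and $M = \{k_j\}_{j=1}^\infty$ with $k_j = a^{j-1}$ and $a = \lceil (32/\eps + 2)^r\rceil$ do satisfy \textbf{Assumption} $\boldsymbol{A(d,\eps/4)}$. The partition condition $\psi_P(k) \geq \tfrac{1}{2}(\psi(k) - 1)$ is noted in Remark \ref{rmk:AssumptionAl1}, and the non-emptiness of $(k_j,k_{j+1}] \cap P$ and $(k_j,k_{j+1}] \cap P^c$ is automatic once $a \geq 3$. The growth condition follows from $\psi(k_{j+1})/\psi(k_j) = a^{1/r} \geq 32/\eps + 2$, together with $\psi(k_j) \geq 1$, exactly as in Remark \ref{rmk:AssumptionAl1} applied with parameter $\eps/4$ (with the mild cosmetic adjustment to the constant $\lceil(32/\eps+2)^r\rceil$ instead of $\lceil(32/\eps+3)^r\rceil$; any boundary value of $j$ in which the $+1$ is not immediately absorbed can be handled by increasing $j$ by one, i.e., starting the sequence $(k_j)$ one step later, which does not affect the set $Q = \{k_{2j}\}$).

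The second step is simply to invoke Theorem \ref{ThmConcentration.LastSection(bis)}: for any $\bar m \in [Q]^d = [\{k_{2j}\}_{j=1}^\infty]^d$, any $k \in Q$ with $k > m_d$, and any step preserving map $F \colon S_{\ell_\infty^k}^+ \to S_{\ell_r^k}^+$ satisfying $\omega_F(1/d) \leq \eps/8$, one obtains
\[
\Big\|F(z(\bar m)) - \tfrac{1}{\psi(k)}\sum_{i=1}^{k} e_i\Big\|_r \leq \eps.
\]
Substituting $\psi(k) = k^{1/r}$ yields the desired inequality.

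There is essentially no obstacle, since all heavy lifting is already done in Theorem \ref{ThmConcentration.LastSection(bis)} and Lemmas \ref{LemmaPartition}--\ref{Lemma.Separation}. The only thing one must be careful about is bookkeeping for the constant $a$: the exponent $r$ appears because in $\ell_r$ the function $\psi$ grows like $k^{1/r}$, so achieving a multiplicative factor of $32/\eps + 2$ between consecutive values of $\psi(k_j)$ forces the ratio $k_{j+1}/k_j$ to be at least $(32/\eps + 2)^r$. This is exactly why the statement for general $r$ in Theorem \ref{ThmConcentration.LastSection} reduces, when $r = 1$, to the cleaner form stated in Theorem \ref{ThmConcentration.LastSection.Intro} of the introduction.
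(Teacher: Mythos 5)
Your proof is correct and takes exactly the same route as the paper's: specialize Theorem \ref{ThmConcentration.LastSection(bis)} by invoking Remark \ref{rmk:AssumptionAl1} with parameter $\eps/4$, $P = 2\N$, and $M = \{a^{j-1}\}_j$. One small remark: you correctly flag that Remark \ref{rmk:AssumptionAl1} with parameter $\eps/4$ literally asks for $a = \lceil (32/\eps + 3)^r \rceil$, whereas the theorem (and the paper's proof) use $a = \lceil (32/\eps + 2)^r \rceil$; however, your proposed fix---``starting the sequence one step later... which does not affect $Q = \{k_{2j}\}$''---is not quite right as stated, since reindexing $k'_j := k_{j+1}$ changes $\{k'_{2j}\}$ to $\{k_{2j+1}\} \neq Q$. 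The actual reason the smaller constant suffices is that the growth condition in \eqref{eq:Mgrowth} is only ever applied to elements of $M$ with index at least $2$ (the proof of Theorem \ref{ThmConcentration.LastSection(bis)} draws $\bar m$ from $\{k_{2j}\}_{j\geq 1}$ and $\bar n$ from $\{k_{2j+1}\}_{j\geq 1}$, so $k_1 = 1$ is never used), and for $j \geq 2$ the added $+1$ is absorbed because $\psi(k_j) \geq a^{1/r} > 1$. This minor bookkeeping imprecision is inherited from the paper itself, and does not affect the validity of your argument.
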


\begin{proof} 
By Remark \ref{rmk:AssumptionAl1}, the subsets $P = 2\N$ and $M = \{k_j\}_{j=1}^\infty \subseteq \N$ satisfy {\bf Assumption} $\boldsymbol{A(d, \eps/4})$ \eqref{AssumptionA}, where  $k_j := a^{j-1}$ and  $a := \lceil (32/\eps + 2)^r \rceil$. Thus, hypotheses of Theorem~\ref{ThmConcentration.LastSection(bis)} are satisfied.
\end{proof} 

\begin{proof}
    [Proof of Theorem \ref{ThmConcentration.LastSection.Intro}]
    This is simply Theorem \ref{ThmConcentration.LastSection} with $r=1$.
\end{proof}

\medskip We finish this paper introducing a local version of Kalton's Property $\mathrm Q$ and showing that it characterizes when unconditional bases are equivalent to the standard basis of $c_0$.

\begin{definition}\label{Defi.Local.Prop.Q} Let $(e_i)_i$ be an unconditional basis of a Banach space $X$. We say that $(e_i)_i$ has the {\it local Property $\mathcal Q$} if there exists a constant $\gamma>0$ such that for any $d\in \N$, $\eps \in (0,1)$, there exists an infinite subset $Q \subseteq \N$ such that for all $\bar m \in [Q]^d$, all $k\in Q$ with $m_d<k$, and for all $F:S_{\ell_\infty^k}^+ \to S_{X_k}^+$ step preserving map with $\omega_F(\frac1d)\le \gamma\eps$ , we have 
\[\Big\|F(z(\bar m))-\frac{\sum_{i=1}^ke_i}{\|\sum_{i=1}^ke_i\|_X}\Big\|_X\le \eps.\]
\end{definition}

We can now characterize the standard $c_0$ basis in terms of the local property $\mathcal Q$. 
\begin{cor}\label{Corollary.Charactc0LocalPropQ} Let $(e_i)_i$ be an unconditional basis of a Banach space $(X,\|\ \|_X)$. Then $(e_i)_i$ has the local Property $\mathcal Q$ if and only if it is not equivalent to the standard $c_0$ basis.
\end{cor}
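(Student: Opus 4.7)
My plan is to establish the two directions separately, with the principal work residing in the forward direction.

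For the direction that $(e_i)_i$ not equivalent to the standard $c_0$ basis implies local Property $\mathcal Q$: after passing to an equivalent norm, I may assume that $(e_i)_i$ is $1$-unconditional, and it remains inequivalent to the $c_0$ basis, so $\psi(k)\to\infty$. In this setting, Theorem~\ref{ThmConcentration.LastSection(bis)} applies directly with $\gamma=1/8$: given $d\in\N$ and $\eps\in(0,1)$, I select $P\subseteq\N$ via Lemma~\ref{LemmaPartition} and $M=\{k_j\}_{j=1}^\infty$ via unboundedness of $\psi$ so that \textbf{Assumption} $\boldsymbol{A(d,\eps/4)}$ is satisfied, and then $Q=\{k_{2j}\}_{j=1}^\infty$ is the required infinite subset of $\N$. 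To return to the original norm, I will invoke the fact that local Property $\mathcal Q$ is preserved (with a possibly smaller $\gamma$) under equivalent renorming: given a step-preserving $F:S_{\ell_\infty^k}^+\to S_{X_k}^+$ in the original norm, composing with normalization onto the sphere of the equivalent $1$-unconditional norm yields a step-preserving map, and both the resulting modulus of continuity and the error vector's norm change by at most constant factors depending only on the renorming equivalence constants.

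For the converse, suppose there exist constants $0<c\leq C$ with $c\|x\|_\infty\leq \|x\|_X\leq C\|x\|_\infty$ for all $x$, and let $K$ denote the unconditional constant of $(e_i)_i$. I will exhibit the explicit step-preserving maps $F_k:S_{\ell_\infty^k}^+\to S_{X_k}^+$ given by $F_k(x)=x/\|x\|_X$. A direct estimate (using that $c\leq\|x\|_X\leq C$ for $x\in S_{\ell_\infty^k}^+$) yields $\omega_{F_k}(t)\leq (2C/c)\,t$ uniformly in $k$. The crucial observation is that $F_k(z(\bar m))$ vanishes on $(m_d,k]$, while $u/\|u\|_X$ (with $u:=\sum_{i=1}^k e_i$) equals the constant $1/\|u\|_X$ on each such coordinate. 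Hence $K$-unconditionality combined with the two-sided $c_0$-estimates yields
\[\Big\|F_k(z(\bar m))-\frac{u}{\|u\|_X}\Big\|_X \ \geq\ \frac{\|1_{(m_d,k]}\|_X}{K\|u\|_X}\ \geq\ \frac{c}{KC}\ =:\ \alpha\ >\ 0.\]
Then, for any candidate $\gamma>0$, I will choose $\eps=\alpha/2$ and $d$ large enough that $\omega_{F_k}(1/d)\leq (2C/c)/d\leq \gamma\eps$. For any infinite $Q\subseteq\N$, selecting $\bar m\in[Q]^d$ and $k\in Q$ with $k>m_d$, the map $F_k$ violates the concentration conclusion, so local Property $\mathcal Q$ fails.

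The converse direction is entirely explicit, the main point being merely to recognize $F_k(x)=x/\|x\|_X$ as the correct test map. The primary bookkeeping burden lies in the forward direction's renorming transfer---verifying, via the triangle inequality applied to the normalization map between the two equivalent norms, that the modulus of uniform continuity and the norm of $F(z(\bar m))-u/\|u\|_X$ each change by only constant multiplicative factors. While routine, this care is what lets us extract an explicit $\gamma$ for the original norm from the $\gamma=1/8$ obtained in the $1$-unconditional setting.
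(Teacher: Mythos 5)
Your proposal is correct and takes essentially the same approach as the paper: the forward direction passes to an equivalent $1$-unconditional norm and invokes Theorem~\ref{ThmConcentration.LastSection(bis)}, transferring back via the bi-Lipschitz normalization map; the converse uses the explicit step-preserving test maps $F_k(x)=x/\|x\|_X$ and observes that they stay a fixed distance away from the normalized all-ones vector. The only cosmetic difference is that you derive the lower bound $\|F_k(z(\bar m)) - u/\|u\|_X\|_X \geq c/(KC)$ directly from unconditionality and the vanishing of $F_k(z(\bar m))$ on $(m_d,k]$, whereas the paper obtains it from the equi-bi-Lipschitzness of $(F_k)_k$ together with $\|z(\bar m)-1_{[1,k]}\|_\infty=1$.
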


\begin{proof}
Assume first that $(e_i)_i$ is equivalent to the standard $c_0$ basis. Then, $F_k:S_{\ell_\infty^k}^+ \to S_{X_k}^+$  such that 
\[F_k(a_1,\ldots,a_k)=\frac{\sum_{i=1}^k a_i e_i}{\|\sum_{i=1}^k a_i e_i\|_X},\ \ (a_1,\ldots,a_k) \in S_{\ell_\infty^k}^+,\]
defines a sequence of equi-bi-Lipschitz equivalences. This, together with the fact that for $\bar m \in [\N]^d$ and $k>m_d$, $\|z(\bar m)-\sum_{i=1}^ke_i\|_\infty=1$, shows that $(e_i)_i$  fails to have the local Property $\mathcal Q$.

Assume now that $(e_i)_i$ is an unconditional basis of $(X,\|\cdot \|_X)$, which is not equivalent to the $c_0$ basis. Let $|\cdot |$ be an equivalent norm on $X$ for which $(e_i)_i$ is 1-unconditional. Then, for any $k\in \N$, $x\mapsto \frac{x}{\|x\|}$ is a step preserving bi-Lipschitz equivalence from $S_{(X_k,|\cdot |)}^+$ onto $S_{(X_k,\|\cdot \|_X)}^+$ mapping $\frac{\sum_{i=1}^ke_i}{|\sum_{i=1}^ke_i|}$ to $\frac{\sum_{i=1}^ke_i}{\|\sum_{i=1}^ke_i\|_X}$. Thus, the conclusion follows from Theorem \ref{ThmConcentration.LastSection(bis)}.
\end{proof}

\begin{acknowledgements*} This paper was written under the auspices of the
American Institute of Mathematics (AIM) SQuaREs program as part of the
``Nonlinear geometry of Banach spaces''  SQuaRE project. B.\ M.\ Braga is thankful to Rufus Willett for introducing him to Property (H) and to Quico for teaching him that spheres do not need to be rounded.
\end{acknowledgements*}

\end{document}